\newtheorem{thm}{Theorem}[section]
\newtheorem*{thm*}{Theorem A}
\newtheorem{lem}[thm]{Lemma}
\newtheorem{prop}[thm]{Proposition}
\newtheorem*{prop*}{Proposition}
\theoremstyle{definition}
\newtheorem{dfn}[thm]{Definition}
\newtheorem{exa}[thm]{Example}
\newtheorem{rmk}[thm]{Remark}
\newcommand{\relmiddle}[1]{\mathrel{}\middle#1\mathrel{}}
\title{The Canonical Lattice Isomorphism between \\
Topologies Compatible with a Linear Space
 and
 Subspaces}
\author{Takanobu Aoyama\footnote{t-aoyama@cr.math.sci.osaka-u.ac.jp}}
\begin{document}

\maketitle

\begin{abstract}
We consider all compatible topologies of an arbitrary finite-dimensional vector space over a non-trivial valuation field whose metric completion is a locally compact space. We construct the canonical lattice isomorphism between the lattice of all compatible topologies on the vector space and the lattice of all subspaces of the vector space whose coefficient field is extended to the complete valuation field. 
Moreover, in this situation, we use this isomorphism to characterize the continuity of linear maps between finite-dimensional vector spaces endowed with given compatible topologies, and also, we characterize all Hausdorff compatible topologies. 
\end{abstract}

\section{Introduction}
The notion of topological vector space is a generalization of the normed spaces. This space is a vector space endowed with a compatible topology (see Definition \ref{topological vector space} for the word ``compatible''). 
In this paper, we consider the set $\tau_K(X)$ of all compatible topologies on a fixed finite-dimensional vector space $X$ over a fixed Hausdorff topological field $K$.
In \cite{Bir}, G. Birkhkoff studied the set $\Sigma(X)$ of all topologies on $X$ with the inclusion order $\subset$ and showed that  
the partially ordered set $(\Sigma(X),\subset)$ has an algebraic structure, namely a lattice structure (see Definition \ref{lattice} for definition of ``lattice'').  
Then, $\tau_K(X)$ is a sub-partially ordered set of $\Sigma(X)$ and also has a lattice structure (moreover, complete lattice structure) with the inclusion order. \par
For a given Hausdorff topological field $K$ and a vector space $X$ over $K$, our goal is to construct a lattice isomorphism between the lattice $\tau_K(X)$ and a lattice of all linear subspaces of a vector space. By this isomorphism, we can handle compatible topologies easier because we can calculate a basis of corresponding subspaces. For example, we can calculate lattice operations of compatible topologies and we can determine which compatible topologies are Hausdorff topologies.   
Y. Chen studied in \cite{Che}, geometric aspects of open neighborhoods in non-Hausdorff compatible topologies, introducing a notion of strip-space.
Also, he stated that when coefficient field is $\mathbb{R}$ or $\mathbb{C}$, non-Hausdorff compatible topologies are determined by the closure of zero, which is a linear subspace.
Indeed, it is valid, when a coefficient field $K$ is a non-trivial complete valuation field that all compatible topologies $\tau_K(X)$ correspond to all its subspaces $\sigma_K(X)$ by a map $F$ which sends a compatible topology $T$ to a closure of zero ${\rm cl}_T[\{0\}]$ with respect to the topology $T$. We denote its inverse map by $G:\sigma_K(X)\rightarrow\tau_K(X)$.    
However, when the coefficient field of $X$ is a {\bf non-trivial, non-complete valuation field} such as $\mathbb{Q}$, the above correspondence does not hold generally, because more than one compatible Hausdorff topology is mapped to the same zero-dimensional subspace, in other words, the map $F$ is not injective.
In this paper, we recover the above correspondence when $X$ is a finite-dimensional vector space over a non-trivial, non-complete valuation field whose metric completion is locally compact. 
It is known that a non-trivial locally compact valuation field is a finite extension of $\mathbb{R}$, $\mathbb{Q}_p$ or $\mathbb{F}_p((t))$ (See \cite{Pon}). Thus, more specifically, we consider a finite product space $X$ of subfield $K$ of them.   
In this case, we conduct a metric completion of $K$ denoted by $\hat{K}$ and a scalar extension defined by $\hat{X}\coloneqq \hat{K} \bigotimes_K X$.   
Then, we construct a more general correspondence $\hat{F}$ and $\hat{G}$ between all compatible topologies $\tau_K(X)$ and all subspaces of $\hat{X}$ denoted by $\sigma_{\hat{K}}(\hat{X})$. More concretely, $\hat{F}$ sends every compatible topology $T$ to the intersections of closures of images of zero's open neighborhoods by the inclusion map $I:X \rightarrow \hat{X}$, where we take closures with respect to the strongest compatible topology $T_{\hat{K}}^{\max}(\hat{X})$ on $\hat{X}$. Namely, the map $\hat{F}$ is defined by:
$$
\hat{F}(T)\coloneqq \bigcap_{0\in U\in T}{\rm cl}_{T_{\hat{K}}^{\max}(\hat{X})}[I(U)].
$$
This map $\hat{F}$ is an analogy of $F$ because $F(T)={\rm cl}_T[\{0\}]$ is equal to the intersections of all open neighborhoods of zero.
For the map $\hat{G}$, because $\hat{X}$ is a finite-dimensional vector space over the non-trivial complete valuation field $\hat{K}$, every subspace $S$ corresponds to compatible topology $G(S)$ on $\hat{X}\supset I(X)$ and then, $\hat{G}(S)$ is defined as a relative topology of $G(S)$, where we identify $X$ and its image $I(X)$. 
Thus, our main theorem (\S 3, Theorem \ref{main thm}) is stated as follows:
\begin{thm*}
Let $K$ be a non-trivial, (possibly non-complete) valuation field whose metric completion is locally compact and $X$ be a finite-dimensional vector space over $K$.
Then, the maps $\hat{F}:\tau_K(X) \rightarrow \sigma_{\hat{K}}(\hat{X})$ and $\hat{G}:\sigma_{\hat{K}}(\hat{X}) \rightarrow \tau_K(X)$ are inverse maps \rm{(}see \S3, Definition \ref{FS} for definitions of $\hat{F}$ and $\hat{G}$ \rm{)}. 
Moreover, $\hat{F}$ and $\hat{G}$ invert the inclusion relation.
\end{thm*}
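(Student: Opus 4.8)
The plan is to prove the statement by showing that $\hat{G}$ is a bijection whose inverse is $\hat{F}$, after which the order-reversal follows almost formally. It suffices to verify three things: that $\hat{G}$ is well defined into $\tau_K(X)$; that $\hat{F}\circ\hat{G}=\mathrm{id}_{\sigma_{\hat{K}}(\hat{X})}$; and that $\hat{G}$ is surjective. Indeed, the identity $\hat{F}\circ\hat{G}=\mathrm{id}$ already forces $\hat{G}$ to be injective and shows that $\hat{F}$ takes subspace values on the image of $\hat{G}$; combined with surjectivity of $\hat{G}$ this makes $\hat{G}$ bijective with $\hat{F}=\hat{G}^{-1}$, so $\hat{F}$ is everywhere defined into $\sigma_{\hat{K}}(\hat{X})$ and $\hat{G}\circ\hat{F}=\mathrm{id}_{\tau_K(X)}$ as well. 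Well-definedness of $\hat{G}$ is routine: $I(X)$ is a $K$-subspace of $\hat{X}$ after restricting scalars, and the relative topology induced on a linear subspace by a compatible topology is again compatible, so $\hat{G}(S)=G(S)|_{I(X)}$ is a compatible topology on $X$.

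For $\hat{F}\circ\hat{G}=\mathrm{id}$, fix a subspace $S$ and compute $\hat{F}(\hat{G}(S))$. The $\hat{G}(S)$-open neighborhoods of $0$ are exactly the sets $O\cap I(X)$ with $O$ a $G(S)$-open neighborhood of $0$ in $\hat{X}$, and cofinally we may take $O=q^{-1}(W)$, where $q\colon\hat{X}\to\hat{X}/S$ is the quotient map and $W$ runs over neighborhoods of $0$. Since $T_{\hat{K}}^{\max}(\hat{X})$ is the standard (product) topology, $I(X)$ is dense in $\hat{X}$, and $O$ is open, the elementary identity $\mathrm{cl}[O\cap D]=\mathrm{cl}[O]$ for $O$ open and $D$ dense gives $\mathrm{cl}_{T_{\hat{K}}^{\max}(\hat{X})}[I(O\cap I(X))]=\mathrm{cl}_{T_{\hat{K}}^{\max}(\hat{X})}[O]$. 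Hence $\hat{F}(\hat{G}(S))=\bigcap_{W}\mathrm{cl}_{T_{\hat{K}}^{\max}(\hat{X})}[q^{-1}(W)]$. Choosing $W$ from a base of closed neighborhoods of $0$, available because $\hat{X}/S$ is a locally compact Hausdorff vector space, makes each $q^{-1}(W)$ closed, so the intersection collapses to $q^{-1}\!\big(\bigcap_W W\big)=q^{-1}(\{0\})=S$, using that $\hat{X}/S$ is Hausdorff. This proves the identity and, as noted, the injectivity of $\hat{G}$.

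The main obstacle is the surjectivity of $\hat{G}$: every compatible topology $T$ on $X$ must be realized as $G(S)|_{I(X)}$ for some subspace $S\subseteq\hat{X}$. I would first reduce to the Hausdorff case. Writing $N=\mathrm{cl}_T[\{0\}]$, which is a $K$-subspace, the topology $T$ coincides with the preimage under $X\to X/N$ of the Hausdorff quotient topology $\bar{T}$, because in a topological vector space every neighborhood of $0$ contains $\overline{\{0\}}$, so every $T$-open set is $N$-saturated. If $\bar{T}=G(\bar{S})|_{X/N}$ for a subspace $\bar{S}\subseteq\widehat{X/N}=\hat{X}/\hat{N}$, then pulling $\bar{S}$ back to $S\subseteq\hat{X}$ along $\hat{X}\to\hat{X}/\hat{N}$ (with $\hat{N}=\hat{K}\otimes_K N$) yields $T=G(S)|_{I(X)}$, since both sides are the preimage of $\bar{T}$. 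It therefore remains to treat a Hausdorff compatible topology $(Y,T_0)$ on a finite-dimensional $K$-space. Here I pass to the completion $\tilde{Y}$: the scalar action extends continuously to $\hat{K}\times\tilde{Y}\to\tilde{Y}$, so $\tilde{Y}$ is a Hausdorff topological vector space over the complete field $\hat{K}$; as it is the closure of the $K$-span of a $K$-basis of $Y$, it equals the $\hat{K}$-span of that basis and is thus finite-dimensional over $\hat{K}$. Consequently $\tilde{Y}$ carries the unique standard topology, and the canonical $\hat{K}$-linear surjection $\hat{Y}=\hat{K}\otimes_K Y\to\tilde{Y}$ descends to a topological isomorphism $\hat{Y}/S_0\xrightarrow{\sim}\tilde{Y}$, where $S_0$ is its kernel. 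Restricting to $Y$ and using that $Y\hookrightarrow\tilde{Y}$ is a topological embedding identifies $T_0$ with $G(S_0)|_{I(Y)}$, as required. This finite-dimensionality of the completion is precisely where passing from $K$ to $\hat{K}$ tames the non-completeness, and it is the heart of the argument; local compactness of $\hat{K}$ enters here and in the previous paragraph through the cited correspondence $F,G$ on $\hat{X}$ and the availability of closed, compact neighborhood bases.

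Finally, the order-reversal is immediate. For $\hat{F}$ it can be read off the definition: if $T_1\subseteq T_2$, then the family of $T_1$-open neighborhoods of $0$ is contained in that of $T_2$, so the defining intersection for $\hat{F}(T_1)$ is taken over a smaller family and $\hat{F}(T_1)\supseteq\hat{F}(T_2)$. Since $\hat{G}=\hat{F}^{-1}$, it reverses inclusion as well; equivalently one sees this directly from the factorization $\hat{G}=(\,\cdot\,)|_{I(X)}\circ G$, as $G$ reverses inclusion while restriction preserves it. This completes the plan.
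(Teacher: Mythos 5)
Your proposal is correct, but on the one genuinely hard point it follows a different route from the paper, so let me compare. The parts that coincide: your order-reversal argument is the same, and your proof of $\hat{F}\circ\hat{G}=\mathrm{id}_{\sigma_{\hat{K}}(\hat{X})}$ (density of $I(X)$ plus a base of closed neighborhoods of $0$ in $\hat{X}/S$, so that the intersection collapses to $q^{-1}(\{0\})=S$) is a clean variant of the paper's Lemma~\ref{identity on sigma}; your reduction of the general case to the Hausdorff case via the quotient by ${\rm cl}_T[\{0\}]$ is also sound, including the pullback bookkeeping $S=p^{-1}(\bar{S})$. The divergence is in showing that $\hat{G}$ reaches every compatible topology. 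The paper proves the equivalent inclusion $\hat{G}\circ\hat{F}(T)\subset T$ directly: it first isolates a ``good'' neighborhood by a dimension-minimization argument (Lemma~\ref{good neighborhood}), then works in the quotient $\hat{X}/\hat{F}(T)$ with balanced neighborhoods and a compactness argument on the annulus $\{\,\hat{y} : 1\le\|\hat{y}\|_{\hat{Y}}\le\nu(\kappa)\,\}$, extracting a finite subcover to get a uniform bound; this annulus argument is exactly where local compactness of $\hat{K}$ enters the paper. You instead complete $(X,T)$ itself: the completion $\tilde{Y}$ is a Hausdorff topological vector space over $\hat{K}$, it is finite-dimensional because the $\hat{K}$-span of a $K$-basis is closed (a standard corollary of Proposition~\ref{Bourbaki}) and contains the dense image of $X$, hence $\tilde{Y}$ carries the unique standard topology, and $T$ is read off as $\hat{G}$ of the kernel of $\hat{X}\to\tilde{Y}$. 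Two caveats on your version. First, the step you assert in one clause---that scalar multiplication extends to a continuous map $\hat{K}\times\tilde{Y}\to\tilde{Y}$ making $\tilde{Y}$ a topological vector space over $\hat{K}$---is the load-bearing step of your entire argument and is not a formality; it is the completion theorem for topological modules over topological rings (Bourbaki, General Topology, Ch.\ III, \S 6, or the corresponding statement for topological vector spaces over valued fields). It is true and standard, but it replaces the paper's whole compactness machinery, so your write-up must cite it or prove it (the filter-product argument needs only continuity of the scalar action at $(0,y_0)$, $(\alpha_0,0)$ and $(0,0)$, together with balanced neighborhoods). Second, your accounting of where local compactness enters is inaccurate: the facts you attribute it to (the correspondence of Lemma~\ref{key lemma} together with Proposition~\ref{Bourbaki} on $\hat{X}$, and closed neighborhood bases in $\hat{X}/S$) require only completeness and non-triviality of $\hat{\nu}$, and in fact no step of your argument uses local compactness at all. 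Granting the completion theorem, your proof therefore establishes the lattice correspondence for \emph{every} non-trivial valuation field, which is strictly stronger than Theorem~\ref{main thm}. That is not an error---using less than the hypotheses provide is legitimate---but it is a strong reason to write out the completion step in full, since it is the only place a hidden appeal to local compactness could lurk; having checked it, I believe it is sound, and what your approach buys is precisely the removal of the local compactness hypothesis, at the cost of importing the nontrivial completion theory that the paper's self-contained compactness argument avoids.
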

As a result, $\tau_K(X)$ is a lattice, which is isomorphic to all subspaces $\sigma_{\hat{K}}(\hat{X})$ with the inverted inclusion relation.\par
As an application, by this correspondence, we show two propositions which relate the topological notions to the corresponding subspaces.
First, we give an equivalent condition for a linear map being continuous with respect to given compatible topologies on the domain and the codomain (\S 5, Proposition \ref{continuous}).  More precisely, let $K$ be the same valuation field in the above Theorem A. and $X,Y$ be two finite-dimensional topological vector spaces over $K$. We denote given compatible topologies on $X$ and $Y$ by $T_X$ and $T_Y$, respectively. Then, for an arbitrary $K$-linear map $L:X \rightarrow Y$ is continuous if and only if the image of the corresponding subspace $\hat{F}(T_X)$ by a map $\hat{L}:\hat{X}\rightarrow\hat{Y}$ is contained in $\hat{F}(T_Y)$, where the map $\hat{L}$ is a $\hat{K}$-linear map such that the following diagram commutes:
\[
  \begin{CD}
     \hat{K}\times X @>{{\rm id}_{\hat{K}} \times L}>> \hat{K} \times Y \\
  @V{\otimes}VV    @V{\otimes}VV \\
     \hat{X}   @>{\hat{L}}>>  \hat{Y}.
  \end{CD}
\]
Second, we show that given compatible topology $T$ is Hausdorff if and only if the intersection of the corresponding subspace $\hat{F}(T)$ with $I(X)$ is $\{0\}$.
\par
The outline of this paper is as follows: In Section 2, we introduce and prepare some notations and propositions, and we recall a correspondence between compatible topologies and subspaces when the coefficient field is a non-trivial {\bf complete} valuation field. In Section 3, we consider a vector space over a non-trivial {\bf non-complete} valuation field.
By a metric completion of the valuation field, we construct a correspondence between its all compatible topologies and all linear subspaces of a vector space over the complete valuation field. The Section 4 is devoted to prove the main theorem (Theorem \ref{main thm})  and in Section 5, we show two propositions as applications of the main theorem.
\section{Preliminaries}
\begin{center}
{\bf In this paper, the term ``topology'' on a set $X$ means a family of subsets of $X$ which satisfies the open set axioms.}\\
\end{center}

\subsection{Lattices}
\begin{dfn}\label{lattice}
A partially ordered set $(L,\leq)$ is called a {\it lattice} if, for arbitrary two elements $x,y$ of $L$, the set $\{x,y\}$ has a supremum $s$ and an infimum $i$ in $L$, where {\it supremum} ({\it infimum}, resp.) is an element $s$ ($i$, resp.) of $L$ satisfying the following two properties:
\begin{enumerate}
\item $x \leq s$ and $y \leq s$.\, ($i \leq x$ and $i \leq y$, resp.)
\item $x \leq s',~ y \leq s'$ implies $s \leq s'$ for all element $s'$ of $L$.\, ($i' \leq x,~ i' \leq y$ implies $i' \leq i$ for all element $i'$ of $L$, resp.) 
\end{enumerate}
A lattice $(L,\leq)$ is called {\it complete} if every subset of $L$ has its supremum and infimum.\\  
Because every two elements $x,y$ of a lattice have a unique supremum and a unique infimum defined above, thus, we define binary operators $x \vee y$ and $x \wedge y$ called the {\it join} and the {\it meet}, respectively, by taking the supremum and the infimum of $\{x,y\}$.    
\end{dfn}

\begin{dfn}
Let $(L_1,\leq_1)$ and $(L_2, \leq_2)$ be lattices. A map $f:L_1 \rightarrow L_2$ is called a {\it lattice homomorphism} if $f$ preserves their joins and meets, that is, $f$ satisfies $f(x \vee_1 y) = f(x) \vee_2 f(y)$ and $f(x \wedge_1 y)= f(x) \wedge_2 f(y)$ for all 
$x,y \in L_1$, where $\vee_1,\vee_2$ are the joins of $L_1,L_2$ and $\wedge_1,\wedge_2$ are the meets of $L_1,L_2$, respectively. A bijective lattice homomorphism is called a {\it lattice isomorphism}. Two lattices are called {\it isomorphic} if there is a lattice isomorphism between them.
\end{dfn}
Note that a bijection between two lattices such that preserve their orders is a lattice isomorphism.

\begin{exa} 
The following are two examples of lattices.
\begin{enumerate}
\item Let $X$ be a set, and let $\Sigma(X)$ denote the set of all topologies on $X$, namely elements of $\Sigma(X)$ are families of subsets of $X$ which satisfy the open set axioms. 
A partially ordered set $(\Sigma(X), \subset)$ is a lattice. For two topologies $T_1,T_2$ on $X$, the join of $\{T_1,T_2\}$ is the topology whose subbase is $T_1 \cup T_2$ and the meet of $\{T_1,T_2\}$ is $T_1 \cap T_2$.
\item Let $X$ be a linear vector space, and let $\sigma(X)$ denote the set of all linear subspaces of $X$. Then, $(\sigma(X), \subset)$ is a lattice, where the join of $\{S_1,S_2\}$ is $S_1 +S_2$ and the meet of $\{S_1,S_2\}$ is $S_1 \cap S_2$.
\end{enumerate}
\end{exa}

\subsection{Topological Vector Spaces}
\begin{dfn}\label{topological field}
A {\it topological field $K$} is a commutative field endowed with a Hausdorff topology with which the following three operators are continuous:
\begin{itemize}
\item the addition: $K \times K \ni (\alpha, \beta) \mapsto \alpha + \beta \in K$,
\item the multiplication: $K \times K \ni (\alpha, \beta) \mapsto \alpha \times \beta \in K$, 
\item the multiplicative inverse: $K\setminus \{0\} \ni \alpha \mapsto \alpha^{-1} \in K\setminus \{0\}$,
\end{itemize}
where we endow $K \times K$ and $K\setminus \{0\}$ with the product topology and the relative topology of $K$, respectively.
\end{dfn}

\begin{dfn}\label{topological vector space}
Let $K$ be a topological field and $X$ be a vector space over $K$. We call a (possibly non-Hausdorff) topology $T$ on $X$ is {\it compatible with $X$} if  the following two operators of $X$ are continuous with respect to $T$:
\begin{itemize}
\item the addition: $X \times X \ni (x,y) \mapsto x+y \in X$, 
\item the scalar multiplication: $K \times X \ni (\alpha,x) \mapsto \alpha \cdot x \in X$, 
\end{itemize}
where we endow $X \times X$ and $K \times X$ with product topologies. A pair $(X,T)$ is called a {\it topological vector space}.
\end{dfn}

\begin{rmk}
If we consider a topological field $K$ which is not a Hausdorff space, it is known that $K$ is an indiscrete  topological space \rm{(}see \cite{Bou} for a proof\rm{)}. Then, a vector space $X$ over $K$ can have only one compatible topology with $X$, namely the indiscrete topology (that is, $\tau_K(X) =\{\{\phi,X\}\}$). Therefore, we assume that a topological field $K$ is a Hausdorff space for the rest of this paper. 
\end{rmk}

\begin{dfn}\label{Minkowski sum}
Let $X$ be a vector space over a field $K$. For subsets $A,B$ of $X$ and for a subset $L$ of $K$, we define the subsets $A+B$ and $L\cdot A$ of $X$ by
\begin{align}
A+B \coloneqq& \{\,a+b \in X \mid a \in A, b\in B\,\}, \nonumber \\
L\cdot A \coloneqq& \{\, \alpha \cdot a \in X \mid \alpha \in L, a \in A\,\}. \nonumber 
\end{align}
For simplicity, we denote $\{a\}+B$ and $A+\{b\}$ by $a +B$ and $A+b$, respectively and also denote $\{\alpha\} \cdot A$ by $\alpha \cdot A$.
\end{dfn}

In this paper, we denote by ${\rm cl}_T[A]$, the closure of the set $A$ with respect to the topology $T$ and by ${\rm int}_T[A]$, the interior of the set $A$.
These notations are abbreviated to ${\rm cl}[A]$ and ${\rm int}[A]$, respectively when there is no danger of confusion.\par 
Because the shift map $X \ni x \mapsto x+a \in X$ for every fixed element $a \in X$ is a self-homeomorphism of a topological vector space $(X,T)$, we have the following:
\begin{enumerate}
\item $a + {\rm cl}[A] = {\rm cl}[a+A]$ for all $a \in X$ and all subsets $A$ of $X$.\\
\item ${\rm cl}[A] +{\rm cl}[B] \subset {\rm cl}[A+B]$ for all subsets $A,B$ of $X$.\\
\item $A +U = \bigcup_{a \in A}(a+ U) \in T$ for all subsets $A$ and for all open subsets $U \in T$.
\end{enumerate}
Here, we give a proof of 2. For an element $a$ in $A$, we have $a+ {\rm cl}[B] = {\rm cl}[a+ B] \subset {\rm cl}[A+B]$ from 1. Thus, $A +{\rm cl}[B]$ is contained in ${\rm cl}[A+B]$.
By taking the closure of both sides of $A + b \subset {\rm cl}[A+B]$ for an element $b$ in ${\rm cl}[B]$, we obtain that ${\rm cl}[A] +b = {\rm cl}[A+b] \subset {\rm cl}[A+B]$ . Therefore, we have ${\rm cl}[A] +{\rm cl}[B] \subset {\rm cl}[A+B]$. 

\begin{dfn}
Let $K$ be a topological field, and let $X$ be a vector space over $K$. We denote, by $\tau_K(X)$, the set of all compatible topologies on $X$.
Also, we define a subset $\tau_K^H(X)$ of $\tau_K(X)$ consisting of all Hausdorff compatible topologies on $X$.
We abbreviate $\tau_K(X)$, $\tau_K^H(X)$ to $\tau(X)$, $\tau^H(X)$, respectively if the coefficient field $K$ is clear.\par
We denote the set of all $K$-linear subspaces of $X$ by $\sigma_K(X)$. We also, abbreviate $\sigma_K(X)$ to $\sigma(X)$ if there is no danger of confusion. 
\end{dfn}
\begin{center}
{\bf Note that we do not identify homeomorphic topologies in this paper.}
\end{center}

\begin{dfn}
Let $X$ and $Y$ be vector spaces over a topological field $K$, and let $f:X \rightarrow Y$ be a linear map.
\begin{enumerate}
\item For  $T \in \tau(Y)$, the {\it topology induced on $X$ by $f$} is
$$
\{\,f^{-1}(V)\mid V \in T\,\} \eqqcolon f^*(T).
$$
\item For  $T' \in \tau(X)$, the {\it topology coinduced on $Y$ by $f$} is
$$
\{\,V \subset Y\mid f^{-1}(V) \in T'\, \} \eqqcolon f_*(T').
$$ 
\end{enumerate}
\end{dfn}

We can see easily that $f^*$ is a map from $ \tau(Y)$ to $ \tau(X)$ and that, if $f$ is surjective, $f_*$ is a map from $\tau(X)$ to $\tau(Y)$.\\

Next, we see that $(\tau_K(X),\subset)$ is a complete lattice and has the maximum element. Consider an arbitrary family $\{T_{\lambda}\}_{\lambda \in \Lambda}$ of $\tau_K(X)$. The topology $T_{\sup}$ generated by $\bigcup_{\lambda \in \Lambda} T_{\lambda}$ belongs to $\tau_K(X)$ because the addition: $(X,T_{\sup})\times (X,T_{\sup}) \rightarrow (X,T_{\lambda})$ and the scalar multiplication: $K \times (X,T_{\sup}) \rightarrow (X,T_{\lambda})$ are continuous for all $\lambda \in \Lambda$. 
This implies that the partially ordered set $(\tau_K(X),\subset)$ is a complete lattice because the above argument directly implies that every subset $\{T_{\lambda}\}_{\lambda \in \Lambda}$ of $\tau_K(X)$ has the supremum and we can obtain its infimum by taking the supremum of the lower bound of $\{T_{\lambda}\}_{\lambda \in \Lambda}$. 
Therefore $(\tau_K(X), \subset)$ is a complete lattice and in particular, has the maximum (strongest) element. We introduce a notation to denote this topology.
\begin{dfn}\label{max topology}
Let $X$ be a vector space over a topological field $K$. Then, $T_K^{\max}(X)$ denotes the maximum topology in the $\tau_K(X)$ with respect to the inclusion. We abbreviate $T^{\max}_K(X)$ to $T^{\max}(X)$ or to $T^{\max}$ if there is no danger of confusion.
\end{dfn}

\begin{prop}\label{Hausdorff}
$(X,T_K^{\max}(X))$ is a Hausdorff space, that is,  $T_K^{\max}(X) \in \tau_K^H(X)$.
\end{prop}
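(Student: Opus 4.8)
The plan is to reduce the statement to the existence of a \emph{single} Hausdorff compatible topology. The first ingredient is the elementary observation that the Hausdorff property is inherited by finer topologies: if $T_0 \subset T$ are two topologies on $X$ and $(X,T_0)$ is Hausdorff, then $(X,T)$ is Hausdorff, since for any pair of distinct points the two disjoint $T_0$-open sets separating them are in particular $T$-open. By Definition \ref{max topology}, $T_K^{\max}(X)$ is the maximum element of $\tau_K(X)$, so it contains every compatible topology; hence it suffices to produce one Hausdorff element $T_0 \in \tau_K^H(X)$ and invoke this monotonicity with $T_0 \subset T_K^{\max}(X)$.

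To construct such a $T_0$, I would fix a basis $\{e_\lambda\}_{\lambda \in \Lambda}$ of $X$ and consider the coordinate embedding $X \hookrightarrow K^{\Lambda}$, where $K^{\Lambda}$ carries the product topology. Since $K$ is a Hausdorff topological field, $K^{\Lambda}$ with the product topology is a Hausdorff topological vector space: coordinatewise addition and scalar multiplication are continuous by the continuity of the field operations together with the universal property of the product topology, and an arbitrary product of Hausdorff spaces is Hausdorff. The image of $X$ is a linear subspace, so the relative topology $T_0$ it inherits is again compatible with the $K$-vector-space structure of $X$ and is Hausdorff as a subspace of a Hausdorff space. In the finite-dimensional case relevant to this paper this is simply the statement that, after fixing an isomorphism $X \cong K^n$, the product topology on $K^n$ is a Hausdorff compatible topology.

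Combining the two steps finishes the argument: $T_0 \in \tau_K^H(X)$ together with $T_0 \subset T_K^{\max}(X)$ forces $(X, T_K^{\max}(X))$ to be Hausdorff. The only point requiring genuine care is the verification that $T_0$ really is compatible, i.e. that the vector-space operations are continuous for the product/coordinate topology; but this is a routine consequence of the continuity of addition and multiplication in $K$. I therefore do not anticipate a serious analytic obstacle: the difficulty is essentially organizational, namely the recognition that Hausdorffness need only be checked for a conveniently weak compatible topology rather than for the maximal one directly.
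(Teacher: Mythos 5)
Your proof is correct, but it follows a genuinely different route from the paper's. The paper argues pointwise: for each non-zero $x_0 \in X$ it decomposes $X = \mathop{\mathrm{span}}_K\{x_0\} \oplus S$, takes the coordinate functional $L : X \to K$, observes that the induced topology $L^*(T_K)$ is compatible and hence contained in $T_K^{\max}(X)$, and so obtains a $T_K^{\max}$-open neighborhood $L^{-1}(V_1)$ of zero missing $x_0$; it then invokes the standard TVS criterion that a compatible topology is Hausdorff as soon as every non-zero point is excluded from some open neighborhood of zero (a fact the paper states as a note, and which itself rests on continuity of subtraction). You instead build a \emph{single} auxiliary Hausdorff compatible topology $T_0$ --- the coordinate (product) topology coming from an embedding $X \hookrightarrow K^{\Lambda}$ --- note $T_0 \subset T_K^{\max}(X)$ by maximality, and conclude by the trivial monotonicity of the Hausdorff property under passing to finer topologies. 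Both proofs hinge on the same mechanism (importing open sets of an auxiliary compatible topology into $T^{\max}$ via maximality), but they trade off differently: your version dispenses with the TVS separation criterion entirely, replacing it by an observation that needs no structure at all, at the price of verifying that the product topology on $K^{\Lambda}$ is a Hausdorff compatible topology and that its restriction to (the image of) $X$ remains so; the latter verification is routine, and in fact the induced topology is exactly $f^*$ of a compatible topology under a linear injection, which the paper has already noted lands in $\tau_K(X)$. The paper's version is more economical locally --- one functional per point, no global product construction --- which is presumably why it was chosen, but your argument is equally valid and arguably more elementary in its topological ingredients.
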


\begin{proof}
Note that a topological vector space $(X,T)$ is a Hausdorff space if and only if, for all non-zero element $x$ in $X$, there is an open neighborhood of zero which does not contain $x$. We denote the topology of the topological field $K$ by $T_K$.\par
Let $x_0$ be a non-zero element of $X$. Then, $X$ is decomposed into a direct sum of $\mathop{\mathrm{span}}_K\{x_0\}$ and a linear subspace $S$, where $\mathop{\mathrm{span}}_K\{x_0\}$ is a linear subspace generated by $\{x_0\}$. Let $L$ be a linear map from $X$ to $K$ defined by
$$
X \ni x = \alpha \cdot x_0 + s \mapsto \alpha \in K,
$$
where $\alpha \cdot x_0$ and $s$ are components of $x$ with respect to the direct sum. 
Since $K$ is a Hausdorff space, we can take disjoint open subsets $V_1$ and $V_2$ in $K$ separating $0$ and $1$. Then, $L^{-1}(V_1)$ is an open neighborhood of zero with respect to $L^*(T_K)$ that does not contain $x_0$. Because $L^*(T_K)$ is in $\tau(X)$, and by the definition of $T^{\max}(X)$, we have $L^*(T_K) \subset T^{\max}(X)$, which implies $L^{-1}(V_1)$ belongs to $T^{\max}(X)$.
Therefore, there is an open neighborhood of zero that does not contain $x_0$ in $(X,T^{\max}(X))$ and we conclude that $(X,T^{\max}(X))$ is a Hausdorff space.   
\end{proof}
Next, we define maps which will be used to construct a correspondence between $\tau_K(X)$ and $\sigma_K(X)$.
\begin{dfn}\label{FG}
Let $X$ be a vector space over a topological field $K$. We define maps $F:\tau_K(X) \rightarrow \sigma_K(X)$ and $G_0:\sigma_K(X) \times \tau_K^H(X) \rightarrow \tau_K(X)$ by
\begin{align}
F(T) \coloneqq& \bigcap_{0 \in U \in T}U, \nonumber \\
G_0(S,T) \coloneqq& \pi_S^* \circ {\pi_S}_* (T), \nonumber
\end{align}
where $\pi_S: X \rightarrow X/S$ is the quotient map. 
\end{dfn}

In the next proposition, we show that $F(T)$ is actually a subspace of $X$ and that $F(T)$ is equal to the closure of zero, ${\rm cl}_T[\{0\}]$.

\begin{prop}\label{g is subspace}
For every $T$ in $\tau_K(X)$, the subset $F(T)$ of $X$ is a $K$-subspace of $X$ and is equal to ${\rm cl}_T[\{0\}]$.
\end{prop}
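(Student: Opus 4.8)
The plan is to prove the two assertions in sequence: first that $F(T)$ is a $K$-subspace of $X$, and then that it coincides with ${\rm cl}_T[\{0\}]$. Since $0$ belongs to every open neighborhood of $0$, we have $0 \in F(T)$ at once, so the subspace claim reduces to checking closure under addition and scalar multiplication, and the two continuity hypotheses in Definition \ref{topological vector space} are exactly what is needed.

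For addition, I would fix $x,y \in F(T)$ and an arbitrary open $U$ with $0 \in U$; continuity of the addition at $(0,0)$ supplies open sets $V,W$ with $0 \in V$, $0 \in W$ and $V + W \subset U$. By the definition of $F(T)$ as the intersection of all open neighborhoods of $0$, we have $F(T) \subset V$ and $F(T) \subset W$, hence $x \in V$ and $y \in W$, so $x+y \in V+W \subset U$. As $U$ was arbitrary, $x+y \in F(T)$. For scalar multiplication the argument is parallel: given $x \in F(T)$, $\alpha \in K$, and open $U \ni 0$, continuity of the scalar multiplication at $(\alpha,0)$ (note $\alpha \cdot 0 = 0 \in U$) yields an open $A \ni \alpha$ in $K$ and an open $V \ni 0$ in $X$ with $A \cdot V \subset U$; then $x \in V$ forces $\alpha \cdot x \in A \cdot V \subset U$, whence $\alpha \cdot x \in F(T)$. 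This establishes $F(T) \in \sigma_K(X)$, and in particular $F(T) = -F(T)$.

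For the identification with the closure I would exploit that every shift map is a self-homeomorphism, so the open neighborhoods of a point $x$ are precisely the sets $x + U$ with $U$ an open neighborhood of $0$. Consequently $x \in {\rm cl}_T[\{0\}]$ holds if and only if every such set $x+U$ contains $0$, i.e. if and only if $-x \in U$ for every open $U \ni 0$, i.e. if and only if $-x \in F(T)$. This gives ${\rm cl}_T[\{0\}] = -F(T)$, which equals $F(T)$ by the symmetry just proved.

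I do not expect a serious obstacle; the only point demanding care is the bookkeeping with the sign, namely that membership of $x$ in the closure corresponds to membership of $-x$ in the intersection. Proving the subspace (hence symmetry) property first is precisely what makes the final identification clean. Alternatively, one can sidestep the sign entirely by combining property 2 of the preliminary list, ${\rm cl}[A] + {\rm cl}[B] \subset {\rm cl}[A+B]$, with the continuity of the dilation $z \mapsto \alpha \cdot z$ to see directly that ${\rm cl}_T[\{0\}]$ is closed under the vector space operations.
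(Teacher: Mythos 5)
Your proposal is correct and follows essentially the same route as the paper: the same continuity-at-$(0,0)$ and at-$(\alpha,0)$ arguments for the subspace property, and the same sign-flip identification $x \in {\rm cl}_T[\{0\}] \Leftrightarrow -x \in F(T)$ resolved by the symmetry $F(T) = -F(T)$. The only difference is presentational — you package the closure comparison as a single equivalence plus symmetry, where the paper proves the two inclusions separately.
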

\begin{proof}
By its definition, $F(T)$ contains zero. Fix two elements $x,y$ in $F(T)$ and zero's open neighborhood $U \in T$. We can take zero's open neighborhood $V$ satisfying $V+V \subset U$ by the continuity of the addition at $(0,0)$.
Then, $x,y$ belong to $V$, by the definition of $F(T)$, which implies $x+y$ is in $U$. Since $U$ is an arbitrary zero's open neighborhood in $(X,T)$, we have $x+y \in F(T)$. Hence $F(T)$ is closed under the addition. A similar argument holds for closedness under the scalar multiplication, using the continuity of the scalar multiplication at $(\alpha,0)$ for all $\alpha \in K$. Therefore $F(T)$ is a subspace of $X$.\par
Next, we show that $F(T)$ is equal to ${\rm cl}_T[\{0\}]$. Take an arbitrary element $x$ from ${\rm cl}_T[\{0\}]$ and zero's open neighborhood $U \in T$. Because $x+U$ is an open neighborhood of $x$, we have $x+U \in T$ contains $0$, which implies $-x$ is in $U$. Since $U$ is an arbitrary zero's open neighborhood, $-x$ is in $F(T)$. Then, $x$ is in the subspace $F(T)$. Thus, we have ${\rm cl}_T[\{0\}] \subset G(T)$. To show the other inclusion, take an element $x$ from $F(T)$.  The subspace $F(T)$ has $-x$. Thus, $0 = x + (-x)$ is in $x+U$ for arbitrary zero's open neighborhood $U$ in $T$, which implies $x$ belongs to ${\rm cl}_T[\{0\}]$ because $\{\,x +U \mid 0 \in U \in T\,\}$ is a base of a neighborhood system at $x$. 
\end{proof}

We see some properties of $F$ and $G_0$ in the next lemma.

\begin{lem}\label{prop of F and G_0}
Let $T \in \tau(X)$ be a compatible topology with $X$ and $S$ be a subspace of $X$. Then, we have:
\begin{enumerate}
\item Every open set $U \in T$ is $F(T)$-invariant, that is, $U + F(T) =U$ holds for all $U \in T$.\nonumber 
\item $G_0(S,T) = \{\, U+S \mid U \in T\,\}$.
\end{enumerate}
\end{lem}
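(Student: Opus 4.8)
The plan is to handle the two assertions separately, using the translation-homeomorphism properties recalled before Definition~\ref{FG} together with the characterization of $F(T)$ established in Proposition~\ref{g is subspace}.

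For assertion~1, since $F(T)$ is a subspace it contains $0$, so $U = U + 0 \subset U + F(T)$ gives one inclusion for free. For the reverse inclusion, I would fix $u \in U$ and use that the translation $x \mapsto x - u$ is a self-homeomorphism of $(X,T)$: the set $U - u$ is then an open neighborhood of $0$, so by the definition $F(T) = \bigcap_{0 \in W \in T} W$ we get $F(T) \subset U - u$, that is, $u + F(T) \subset U$. Taking the union over all $u \in U$ yields $U + F(T) \subset U$, and hence equality.

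For assertion~2, I would first unwind the definition $G_0(S,T) = \pi_S^* \circ {\pi_S}_*(T)$. By the definitions of the coinduced and induced topologies, ${\pi_S}_*(T) = \{\, V \subset X/S \mid \pi_S^{-1}(V) \in T \,\}$, and therefore
\[
G_0(S,T) = \{\, \pi_S^{-1}(V) \mid V \subset X/S,\ \pi_S^{-1}(V) \in T \,\}.
\]
The key observation is that the subsets of $X$ of the form $\pi_S^{-1}(V)$ are exactly the $S$-saturated ones: from the identity $\pi_S^{-1}(\pi_S(W)) = W + S$ one sees that $\pi_S^{-1}(V) + S = \pi_S^{-1}(V)$ always holds, and conversely $W = W + S$ forces $W = \pi_S^{-1}(\pi_S(W))$. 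Thus
\[
G_0(S,T) = \{\, W \in T \mid W + S = W \,\},
\]
the family of $S$-invariant open sets.

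It then remains to prove the two inclusions against $\{\, U + S \mid U \in T \,\}$. For the inclusion $\supset$, given $U \in T$ the set $U + S = \bigcup_{s \in S}(U + s)$ is a union of open translates of $U$, hence open, and it satisfies $(U+S) + S = U + (S+S) = U + S$ because $S$ is a subspace; so $U + S$ lies in $G_0(S,T)$. For the inclusion $\subset$, any $W \in G_0(S,T)$ satisfies $W = W + S$ with $W \in T$, so $W$ is itself of the form $U + S$ with $U = W$. This establishes the desired equality. The computations are elementary throughout; the only point requiring care is the bookkeeping in the second assertion, namely the identification of the image of $\pi_S^* \circ {\pi_S}_*$ with the saturated open sets via $\pi_S^{-1}(\pi_S(W)) = W + S$, and I expect no genuine obstacle.
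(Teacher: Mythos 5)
Your proposal is correct and follows essentially the same route as the paper: part~1 via $F(T)\subset U-u$ for each $u\in U$, and part~2 via the identity $\pi_S^{-1}(\pi_S(U))=U+S$ together with the openness of $U+S$ as a union of translates. Your intermediate reformulation of $G_0(S,T)$ as the family of $S$-saturated open sets is only an organizational difference, not a different argument.
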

\begin{proof}
First, we prove 1. $U + F(T) \supset U$ holds since $F(T)$ contains $0$ from Proposition \ref{g is subspace}.  Take an element $x$ from $U +F(T)$. Then, $x$ is represented as $x = u +y$, where $u \in U$ and $y \in F(T)$. By the definition of $F(T)$, zero's open neighborhood $U-u$ contains $F(T)$. Hence $y \in U -u$ and $x = u+y \in U$. Therefore $U$ is $F(T)$-invariant.\par
Next, we prove 2. It follows from the definition of $G_0(S,T)$ and from $\ker(\pi_S)=S$, that each element of $G_0(S,T)$ is represented as $\pi_S^{-1}(V) = \pi_S^{-1}(V) +S$, where $V \in {\pi_S}_*(T)$. Then, $\pi_S^{-1}(V)$ is in $T$ by the definition of ${\pi_S}_*(T)$. Thus, $G_0(S,T)$ is contained in $\{\, U+S \mid U \in T\,\}$. Next, take an open subset $U$ from $T$. An equality $\pi_S^{-1}(\pi_S(U)) = U+S \in T$ implies that $\pi_S(U)$ is in ${\pi_S}_*(T)$ and thus, $U+S$ is in $\pi_S^*\circ {\pi_S}_*(T) = G_0(S,T)$.      
\end{proof}

The following lemma, which plays an important role in this paper states that all compatible topologies with $X$ are constructed from Hausdorff ones and that, when a compatible Hausdorff topology is unique, compatible topologies $T$ are determined by their corresponding subspaces ${\rm cl}_T[\{0\}]$ as Y. Chen described in \cite{Che}. Although essentially the same statement as this lemma is proven in (\S 5, \cite{Kel}), when the coefficient field of $X$ is $\mathbb{R}$ or $\mathbb{C}$, here we give a proof for a more general topological field.

\begin{lem}\label{key lemma}
Let $X$ be a \rm{(}possibly infinite-dimensional\rm{)} vector space over a Hausdorff topological field $K$. Then,
\begin{enumerate}
\item $G_0: \sigma(X) \times \tau^H(X) \rightarrow \tau(X)$ is surjective. \\
\item If $\tau^H(X)$ has at most one element \rm{(}equivalently, $\tau^H(X) =\{T^{\max}(X)\}$\rm{)}, the map $G :\sigma(X) \rightarrow \tau(X)$ defined by
$$
G(S) \coloneqq G_0(S,T^{\max}(X))
$$
is bijective.
Moreover, the inverse map of $G$ is $F:\tau(X) \rightarrow \sigma(X)$ and the lattice $(\tau(X), \subset)$ is  isomorphic to the lattice $(\sigma(X), \supset)$ by $F$. 
\end{enumerate}
\end{lem}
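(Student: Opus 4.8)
The plan is to establish part 1 first and then derive part 2, including the surjectivity of $G$, from it. For part 1, given $T \in \tau(X)$ I would set $S \coloneqq F(T)$ and note, using both items of Lemma \ref{prop of F and G_0}, that $G_0(S,T) = \{\,U+S \mid U \in T\,\} = \{\,U+F(T) \mid U \in T\,\} = T$, since each $U \in T$ is $F(T)$-invariant. This displays $T$ as a value of $G_0$, but the second argument of $G_0$ must be \emph{Hausdorff}, whereas $T$ itself need not be. The crucial reduction is that $G_0(S,-) = \pi_S^*\circ{\pi_S}_*(-)$ depends on its second argument only through the coinduced quotient topology: for any compatible $T'$ with ${\pi_S}_*(T') = {\pi_S}_*(T)$ one has $G_0(S,T') = G_0(S,T) = T$. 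So it suffices to produce a Hausdorff compatible $T'$ with ${\pi_S}_*(T') = {\pi_S}_*(T) \eqqcolon \bar T$. I would build it by splitting the space: choosing a linear complement $W$ of $S$, so that $\pi_S|_W \colon W \to X/S$ is a linear isomorphism, I let $T'$ be the product topology on $X \cong S \oplus W$ coming from $T_K^{\max}(S)$ on $S$ and the transported copy of $\bar T$ on $W$. Then $T'$ is compatible, a direct computation gives ${\pi_S}_*(T') = \bar T$, and $T'$ is Hausdorff once $\bar T$ is Hausdorff (as $T_K^{\max}(S)$ is Hausdorff by Proposition \ref{Hausdorff}). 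To see $\bar T$ is Hausdorff I would compute $F(\bar T)$: since every $U \in T$ is $S$-invariant, all open neighborhoods of $0$ are $S$-saturated, whence $F(\bar T) = \pi_S\!\left(\bigcap_{0\in U\in T} U\right) = \pi_S(S) = \{\bar 0\}$, so $\bar T$ is Hausdorff by Proposition \ref{g is subspace}.

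For part 2, assume $\tau^H(X) = \{T^{\max}(X)\}$. Surjectivity of $G$ is then immediate from part 1: the topology $T'$ produced there is Hausdorff, hence equals $T^{\max}(X)$, so $T = G_0(S,T') = G(S)$. I would next prove $F\circ G = {\rm id}_{\sigma(X)}$ (which actually needs no hypothesis on $\tau^H(X)$). Fixing $S$, the inclusion $S \subseteq F(G(S))$ is clear because every open neighborhood of $0$ in $G(S) = \{\,U+S \mid U \in T^{\max}(X)\,\}$ contains $S$. For the reverse, applying $\pi_S$ and using that the members of $G(S)$ are $S$-saturated gives $\pi_S(F(G(S))) = F\big({\pi_S}_*(T^{\max}(X))\big)$; and ${\pi_S}_*(T^{\max}(X)) = T_K^{\max}(X/S)$ by a double inclusion (it is compatible, hence $\subseteq T_K^{\max}(X/S)$, while $\pi_S^*(T_K^{\max}(X/S))$ is compatible, hence $\subseteq T^{\max}(X)$, which makes $\pi_S$ continuous into $(X/S, T_K^{\max}(X/S))$ and yields the reverse containment). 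As $T_K^{\max}(X/S)$ is Hausdorff, $F\big({\pi_S}_*(T^{\max}(X))\big) = \{\bar 0\}$, so $F(G(S)) \subseteq \ker\pi_S = S$ and thus $F(G(S)) = S$. Hence $F\circ G = {\rm id}$, which forces $G$ injective; with surjectivity this makes $G$ bijective and $F = G^{-1}$, so $G\circ F = {\rm id}_{\tau(X)}$ as well.

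For the lattice isomorphism I would check that both $F$ and its inverse $G$ reverse inclusions. That $T_1 \subseteq T_2$ implies $F(T_1) \supseteq F(T_2)$ is immediate from $F(T) = \bigcap_{0\in U\in T} U$, a larger topology having more open neighborhoods of $0$. That $S_1 \subseteq S_2$ implies $G(S_1) \supseteq G(S_2)$ follows because, for $U \in T^{\max}(X)$, the set $U+S_2$ is open (a union of translates of $U$) and $S_1$-invariant, so $U+S_2 = (U+S_2)+S_1 \in G(S_1)$. Thus $F$ is an inclusion-reversing bijection with inclusion-reversing inverse, i.e. an order isomorphism $(\tau(X),\subset) \to (\sigma(X),\supset)$, and by the remark that an order-preserving bijection of lattices is a lattice isomorphism, $F$ is the claimed lattice isomorphism.

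I expect the main obstacle to be part 1: constructing a Hausdorff compatible topology $T'$ that coinduces the prescribed quotient topology $\bar T$, and verifying that the two relevant quotients ($\bar T$ and ${\pi_S}_*(T^{\max}(X))$) are Hausdorff. The splitting construction and the identity ${\pi_S}_*(T^{\max}(X)) = T_K^{\max}(X/S)$ are the technical heart of the argument, whereas the inclusion-reversal of $F$ and $G$ and the extraction of surjectivity from part 1 are routine.
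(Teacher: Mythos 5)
Your proof is correct and follows essentially the same route as the paper's: both split $X = F(T) \oplus (\text{complement})$, place $T^{\max}(F(T))$ on the factor $F(T)$ and a Hausdorff topology induced from $T$ on the complement, take the product topology to get the required Hausdorff element of $\tau^H(X)$, and in part 2 both extract $F\circ G = \mathrm{id}$ from the Hausdorffness of $T^{\max}(X/S)$ given by Proposition \ref{Hausdorff}. The only difference is packaging: you transport the quotient topology ${\pi_S}_*(T)$ onto the complement and verify ${\pi_S}_*(T')={\pi_S}_*(T)$, using that $G_0(S,\cdot)$ factors through ${\pi_S}_*$, whereas the paper puts the relative topology of $T$ on the complement and computes $G_0(F(T),T_0)=T$ directly---these are the same construction, since every $T$-open set is $F(T)$-saturated by Lemma \ref{prop of F and G_0}.
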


\begin{proof}
First, we show that $G_0$ is surjective. We fix $T$ as an arbitrary element of $\tau(X)$. By extending a basis of $F(T)$, the space $X$ is decomposed into $X' \oplus F(T)$, where $X' \subset X$ is a linear subspace.  We endow $X'$ with a relative topology $T_{X'}$ of $(X,T)$ and $F(T)$ with the topology $T^{\max}(F(T))$. 
We define a natural linear isomorphism $L$ by
$$
L:X' \times F(T) \ni (x',y) \mapsto x'+y \in X' \oplus F(T) = X.
$$
We obtain a topology $T_0$ on $X$ coinduced by $L$, where $X' \times F(T)$ has a product topology. Because the natural inclusion $X' \xhookrightarrow{} X$ is linear, the topology  $T_{X'}$ is a compatible topology. 
Because, for every non-zero $x'\in X'$, by the definition of $F(T)$, we can take zero's open neighborhood $U \in T$ to which $x'$ does not belong, $T_{X'}$ is Hausdorff. Thus, $(X',T_{X'})$ is a Hausdorff topological vector space. The space $(F(T),T^{\max}(F(T)))$ is also a Hausdorff space by Proposition \ref{Hausdorff}. Thus, $T_0$ is a Hausdorff topology, that is, $T_0 \in \tau^H(X)$. 
We complete a proof of 1 by showing the following equalities:
\begin{align}
G_0(F(T),T_0) =&\{\,U + F(T) \mid U \in T_0 \,\}\\
                 =&\{\,(X' \cap V) + F(T) \mid V \in T \,\}\\
                 =&\,T.
\end{align}
(1) follows from 2 of Lemma \ref{prop of F and G_0}. For (2), each element $U$ in $T_0$ is represented as 
$$
U=\bigcup_{i \in I} ((X'\cap V'_i) + V_i),
$$
where $I$ is an index set, $\emptyset \not=V'_i \in T$ and $\emptyset \not=V_i \in T^{\max}(F(T))$. Then, we have 
$$
U + F(T) = X' \cap (\bigcup_{i \in I}V'_i) + F(T).
$$
This implies that the inclusion $\subset$ in (2) holds. The opposite inclusion in (2) can be proved by taking $U \in T_0$ as $U =L((X' \cap V) \times F(T))$ for each $V \in T$. For the equality in (3), it is enough  to show that $(X' \cap V) +F(T)$ is equal to $V$ for every open set $V$ in $T$. Fix an open subset $V$ from $T$. By 1 of Lemma \ref{prop of F and G_0}, the $F(T)$-invariance of $V$ implies $(X' \cap V) +F(T) \subset V$. For the opposite inclusion, take an element $x$ from $V$. Since $X$ is decomposed into $X' \oplus F(T)$, the element $x$ is represented as $x=x'+y$, where $x' \in X'$ and $y \in F(T)$. Again, from the $F(T)$-invariance of $V$, we have $x' = x -y$ is in $V$. Thus, $x=x' + y \in (X'\cap V) +F(T)$.\par
Next, we assume that $\tau^H(X) =\{T^{\max}(X)\}$ and show that $G$ is bijective. Since $T_0$ in the proof of the first claim coincides with $T^{\max}(X)$, we have $G \circ F(T) = G_0(F(T), T^{\max}(X)) = G_0(F(T),T_0) = T$. From 2 of Lemma \ref{prop of F and G_0}, we have
$$
F \circ G (S) = \bigcap \{\,U+S \mid 0 \in U +S, U \in T^{\max}(X)\,\}= \bigcap \{\,V+S \mid 0 \in V \in T^{\max}(X)\,\}.
$$
Thus, it suffices to show that $\bigcap\{\,V+S \mid 0 \in V \in T^{\max}(X)\,\}$ is equal to $S$ to prove $F \circ G = {\rm id}_{\sigma(X)}$. It is obvious that $S$ is contained in $\bigcap \{V+S \mid 0 \in V \in T^{\max}(X)\,\}$. For an element $x$ from $X \setminus S$, because the quotient space $(X/S,T^{\max}(X/S))$ is a Hausdorff space from Proposition \ref{Hausdorff}, we can take disjoint open neighborhoods $V_1$ and $V_2$ of $\pi_S(x)$ and $\pi_S(0)$, respectively. 
Since $\pi_S^{-1}(V_2)= \pi_S^{-1}(V_2) +S$ is in $T^{\max}(X)$, which does not have an intersection with $\pi_S^{-1}(V_1)$, hence we have
$$
x\not\in\bigcap \{\,V+S \mid 0 \in V \in T^{\max}(X)\,\}.
$$   
Therefore, we have $G \circ F = {\rm id}_{\tau(X)}$ and $F \circ G = {\rm id}_{\sigma(X)}$. \par
Last, $(\tau_K(X),\subset)$ is a lattice by the argument right before Definition \ref{max topology}. The definition of $F$ and 2 of Lemma \ref{prop of F and G_0} imply that $F$ and $G$ preserve the orders of $(\tau(X),\subset)$ and $(\sigma(X),\supset)$. It is obvious that a bijection between lattices which preserve their orders is a lattice isomorphism. Therefore, a lattice $(\tau(X), \subset)$ is isomorphic to $(\sigma(X),\supset)$.   
\end{proof}

\begin{dfn}
When $\tau_K^H(X)$ has at most one element, we call the map $G:\sigma_K(X) \rightarrow \tau_K(X)$ in the above Lemma \ref{key lemma} the \it{strip map} between $\sigma_K(X)$ and $\tau_K(X)$. 
\end{dfn}

By lemma \ref{key lemma}, we can understand the lattice $(\tau_K(X),\subset)$ by a lattice of all subspaces $(\sigma_K(X), \supset)$ if $\tau_K^H(X)$ is a singleton.
However, when $\tau_K^H(X)$ has more than one element, we can not understand $(\tau_K(X), \subset)$ just by using Lemma \ref{key lemma}, since it does not state the structure of $\tau_K^H(X)$. 
Next, we give two examples of vector spaces: one example is a vector space that has a unique compatible Hausdorff topology 
and the other is a vector space which has more than one compatible Hausdorff topology

\begin{exa}
Let $K$ be a finite field which has $q$ elements with discrete topology, and $X$ be a finite-dimensional vector space over $K$. Because a Hausdorff topology on a finite point set is the discrete topology, we can use Lemma \ref{key lemma} to conclude that the number of elements of $\tau_K(X)$ is equal to that of $\sigma_K(X)$, namely
\begin{align}
\sum_{d=0}^{n} \frac{\prod_{k=1}^n (q^k-1)}{\prod_{k=1}^d (q^k-1) \prod_{k=1}^{n-d} (q^k-1)},\nonumber
\end{align}
where $n$ is a dimension of $X$ and $\prod_{k=1}^0(q^k-1)=1$. See \cite[Theorem 1]{Cha} for a proof of the cardinality of $\sigma_K(X)$ being represented as above.  
\end{exa}

We introduce a notion of a valuation field to give another example.
\begin{dfn}\label{valuation field}
Let $K$ be a commutative field. A function $\nu :K \rightarrow \mathbb{R}$ is called a {\it valuation on $K$} if $\nu$ satisfies the following four properties:
\begin{enumerate}
\item for all $\alpha \in K$, $\nu(\alpha) \geq 0$,
\item for all $\alpha \in K$, $\nu(\alpha) = 0 \Leftrightarrow \alpha = 0$,
\item for all $\alpha, \beta \in K$, $\nu(\alpha \times \beta) = \nu(\alpha)\nu(\beta)$,
\item for all $\alpha, \beta \in K$, $\nu(\alpha + \beta) \leq \nu(\alpha) + \nu(\beta)$.
\end{enumerate}
We say a valuation field $(K,\nu)$ is {\it complete} and {\it non-trivial} if a metric space $(K,d_{\nu})$ defined by $d_{\nu}(\alpha,\beta) \coloneqq\nu(\alpha - \beta)$ is complete and is not a discrete topological space, respectively.
\end{dfn}

Next, we consider a finite-dimensional vector space $X$ over a non-trivial complete valuation field $(K,\nu)$. We see that $\tau_K^H(X)$ is a singleton by the following proposition stated in \cite{Bou}.

\begin{prop}[{\rm \cite[\S 2, No.3, Theorem 2]{Bou}}]\label{Bourbaki} 
Let $(K,\nu)$ be a non-trivial complete valuation field and $X$ be a finite-dimensional Hausdorff topological vector space over $K$. For a linear basis $\{\, b_1,b_2, \dots, b_n\,\}$ of $X$, a linear map $\displaystyle K^n \ni (\alpha_1,\alpha_2,\dots,\alpha_n) \mapsto \sum_{i=1}^n \alpha_i \cdot b_i \in X$ is a homeomorphism, where $K^n$ is endowed with a product topology of $K$.
\end{prop}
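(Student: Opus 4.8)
The plan is to prove the statement by induction on $n=\dim_K X$, after first disposing of the easy half. The map $\phi\colon K^n \to X$, $(\alpha_1,\dots,\alpha_n)\mapsto \sum_{i=1}^n \alpha_i \cdot b_i$, is automatically continuous, since it is assembled from finitely many scalar multiplications and additions, all of which are continuous because $T$ is compatible with $X$ (Definition \ref{topological vector space}); this uses neither completeness nor non-triviality. The entire content of the proposition is therefore the continuity of $\phi^{-1}$, equivalently the continuity of each coordinate functional $f_i = e_i^\ast\circ\phi^{-1}\colon X \to K$.

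The base case $n=1$ is the heart of the argument. Writing $X=K\cdot b$ and $\psi\colon K\to X$, $\alpha\mapsto\alpha\cdot b$, I would first check that balanced neighborhoods of $0$ form a neighborhood base: given a neighborhood $N$ of $0$, continuity of scalar multiplication at $(0,0)$ yields $r>0$ and a neighborhood $M$ of $0$ with $\{\lambda:\nu(\lambda)\le r\}\cdot M\subseteq N$, and $W\coloneqq\{\lambda:\nu(\lambda)\le r\}\cdot M$ is then a balanced neighborhood of $0$ inside $N$ (it is a neighborhood because non-triviality provides $\lambda_0\neq 0$ with $\nu(\lambda_0)\le r$, whence $\lambda_0 M\subseteq W$). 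Since $X$ is Hausdorff, I may choose such a balanced $W$ with $b\notin W$. Then $\alpha\cdot b\in W$ forces $\nu(\alpha)<1$, for otherwise $\nu(\alpha^{-1})\le 1$ and balancedness would give $b=\alpha^{-1}\cdot(\alpha\cdot b)\in W$. Picking $\pi\in K$ with $0<\nu(\pi)<1$ (non-triviality again) and noting that multiplication by $\pi^k\neq 0$ is a self-homeomorphism of $X$, I get $\psi^{-1}(\pi^k\cdot W)\subseteq\{\alpha:\nu(\alpha)<\nu(\pi)^k\}$; since $\nu(\pi)^k\to 0$, this proves $\psi^{-1}$ is continuous at $0$, hence everywhere by translation invariance.

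For the inductive step, assume the statement for all dimensions strictly below $n$. Fix $i$ and set $H_i\coloneqq\mathrm{span}_K\{b_j:j\neq i\}$ with the subspace topology; it is an $(n-1)$-dimensional Hausdorff topological vector space, so by the induction hypothesis it is linearly homeomorphic to $K^{n-1}$. Because $K$ is complete, $K^{n-1}$ is complete in its product uniformity, and a linear homeomorphism is an isomorphism of the underlying additive uniform structures, so $H_i$ is complete; a complete subspace of a Hausdorff topological vector space is closed, hence $H_i$ is closed in $X$. Consequently $X/H_i$ is a $1$-dimensional Hausdorff topological vector space, and $f_i$ factors as $f_i=g_i\circ q_i$, where $q_i\colon X\to X/H_i$ is the continuous quotient map and $g_i\colon X/H_i\to K$ is the induced linear bijection. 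By the base case $g_i$ is a homeomorphism, so $f_i=g_i\circ q_i$ is continuous; as $i$ was arbitrary, $\phi^{-1}=(f_1,\dots,f_n)$ is continuous, completing the induction.

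I expect the main obstacle to be the base case together with the closedness of $H_i$, since these are the only two places where the hypotheses genuinely enter: non-triviality supplies the small scalars $\lambda_0$ and $\pi$ needed to run the $1$-dimensional estimate, while completeness of $K$ is exactly what forces the finite-dimensional subspace $H_i$ to be complete, hence closed, so that the quotient $X/H_i$ is again Hausdorff and the induction can descend. Everything else is formal bookkeeping about compatible topologies and quotients.
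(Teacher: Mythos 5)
The paper does not prove this proposition at all: it is quoted from Bourbaki \cite[\S 2, No.3, Theorem 2]{Bou} and used as a black box, so there is no internal proof to compare yours against. Your argument is correct, and it is essentially the classical proof of the cited theorem: continuity of $\phi$ is formal, the one-dimensional case is settled by balanced neighborhoods together with the Hausdorff hypothesis, and completeness of $K$ enters exactly once, to make each hyperplane $H_i$ complete, hence closed, so that the quotient $X/H_i$ is again Hausdorff and the induction closes. Incidentally, your balanced-neighborhood construction duplicates the paper's Lemma \ref{balanced}, which you could invoke instead of reproving it.
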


As a consequence, for a finite-dimensional vector spaces over $\mathbb{R},\mathbb{C}$ and $\mathbb{Q}_p$, all compatible topologies correspond to its all subspaces.
Next, we give an example of a vector space which has more than one compatible Hausdorff topology.

\begin{exa}
We endow a real field $\mathbb{R}$ with the ordinary topology defined by the ordinary absolute value and give a relative topology to $K\coloneqq\mathbb{Q}$.
We construct two compatible Hausdorff topologies on $X\coloneqq\mathbb{Q} \times \mathbb{Q}$. One is a product topology $T_P$ and the other is constructed as follows.
Let $F\coloneqq\mathbb{Q}(\sqrt{2})$ be the smallest subfield of $\mathbb{R}$ which contains $\mathbb{Q}$ and $\sqrt{2}$.  We identify $X$ and $F$ as a $\mathbb{Q}$-vector space by a bijection map $f: X \rightarrow F$ by which $(p,q)$ is sent to $p +q\sqrt{2}$. 
We endow $F$ with a relative topology of $\mathbb{R}$ and endow $X$ with an induced topology by $f$ denoted by $T_f$. 
Then, $T_P$ and $T_f$ are both Hausdorff compatible topologies.  We show that $T_P$ does not coincide with $T_f$ as a family of open sets in $X$ by contradiction.
Assume that $T_P$ coincides with $T_f$.  
Since zero has an open neighborhood which is contained in a set $B\coloneqq\{\,(p,q) \in X \mid p^2 +q^2 <1\,\}$ in $(X,T_P)$, there is a small positive $\epsilon$ such that $f^{-1}(B'(0,\epsilon))$ is contained in $B$, where $B'(0, \epsilon)$ is an open ball of radius $\epsilon$ and centered at $0$ in $\mathbb{R}$. Fix a sufficiently large positive integer $m$ so that $10^{-m} < \epsilon$ holds. We define a point $(p_0,q_0)$ in $X$ by
$$
p_0 \coloneqq -\frac{1}{10^{m}}\lfloor 10^m\sqrt{2} \rfloor,~q_0 \coloneqq 1,
$$
where $\lfloor \cdot \rfloor$ is the floor function by which a real number $x$ is sent to an integer that is less than or equal to $x$.  Then, $f(p_0,q_0)$ is in $B'(0,\epsilon)$, and is in $B$. This is a contradiction because we obviously have $1 \leq {p_0}^2+{q_0}^2$, which implies that the point $(p_0,q_0)$ does not belong to $B$.   
\end{exa}

\section{Topologies on a Vector Space over a Non-complete Valuation Field }
\begin{center}
{\bf For the rest of this paper, we only consider $K$ to be a valuation field $(K,\nu)$.}
\end{center}
Our main theorem is an analogy of 2 of Lemma \ref{key lemma} without the assumption that $\tau_K^H(X)$ is a singleton. 
First, we conduct a metric completion of a valuation field.
Let $(K,\nu)$ be a non-trivial valuation field (see Definition \ref{valuation field}). We denote by $\hat{K}$ the completion of $K$ as a metric space $(K,d_{\nu})$. 
Then, $\hat{K}$ has natural addition, $+$ and multiple, $\times$ defined by  
\begin{align}
\hat{\alpha} + \hat{\beta} \coloneqq& \lim_{n \to \infty} \alpha_n + \beta_n, \nonumber \\
\hat{\alpha} \times \hat{\beta} \coloneqq& \lim_{n \to \infty} \alpha_n \times \beta_n, \nonumber
\end{align}
for all $\hat{\alpha},\hat{\beta} \in \hat{K}$, where $\{\alpha_n\}_{n=1}^{\infty}$ and $\{\beta_n\}_{n=1}^{\infty}$ are sequences in the dense subset $K \subset \hat{K}$ which converge to $\hat{\alpha}$ and $\hat{\beta}$, respectively. 
$\hat{K}$ is a field extension of $K$, and an extension $\hat{\nu}$ of the valuation $\nu$ to $\hat{K}$ is defined by 
$$
\displaystyle \hat{\nu}(\hat{\alpha})\coloneqq \lim_{n \to \infty} \nu(\alpha_n)
$$
for all $\hat{\alpha} \in \hat{K}$, where $\{\alpha_n\}_{n=1}^{\infty}$ is a sequence in $K$ which converges to $\hat{\alpha} \in \hat{K}$. Now we have a complete valuation field $(\hat{K}, \hat{\nu})$.
For a given {\bf finite-dimensional} vector space $X$ over $K$, we denote, by $\hat{X}$, a tensor product $\hat{K} \bigotimes_K X$, which is a $\hat{K}$-vector space and denote, by $I: X \rightarrow \hat{X}$, an injective map defined by $x \mapsto 1 \otimes x$.\\

\begin{dfn} \label{FS}
We define maps $\hat{F}:\tau_K(X) \rightarrow \sigma_{\hat{K}}(\hat{X})$ and $\hat{G}:\sigma_{\hat{K}}(\hat{X}) \rightarrow \tau_K(X)$ by
\begin{align}
\hat{F}(T) \coloneqq&\bigcap_{0 \in U \in T} {\rm cl}_{T_{\hat{K}}^{\max}(\hat{X})}[I(U)],\nonumber \\
\hat{G}(S) \coloneqq&I^*(G(S)), \nonumber 
\end{align}
where $G$ is the strip map between $\sigma_{\hat{K}}(\hat{X})$ and $\tau_{\hat{K}}(\hat{X})$.
\end{dfn}

The topology $\hat{G}(S)$ is an element of $\tau_K(X)$ by the following reasons:
$\hat{X}$ is a $K$-vector space by restricting the scalar multiplication to $K \times \hat{X}$.
Therefore, $G(S)$ is an element of $\tau_K(\hat{X})$.
Since $\hat{G}(S)$ is a coinduced topology by the $K$-linear map $I: X \rightarrow \hat{X}$, we have $\hat{G}(S) \in \tau_K(X)$.\\

We show that $\hat{F}(T)$ is actually a $\hat{K}$-subspace of $\hat{X}$.

\begin{lem}\label{hat S is a subspace}
For each $T \in \tau_K(X)$, a subset $\hat{F}(T)$ of $\hat{X}$ is a $\hat{K}$-subspace of $\hat{X}$.
\end{lem}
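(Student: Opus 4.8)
The plan is to mimic the proof of Proposition \ref{g is subspace}, but accounting for two new features: the closures are taken in the ambient topology $T^{\max}\coloneqq T_{\hat{K}}^{\max}(\hat{X})$ rather than in $T$ itself, and the scalars range over the \emph{larger} field $\hat{K}$. Two observations are immediate and will be used throughout. First, $0 \in \hat{F}(T)$: every $I(U)$ contains $I(0)=0$, so each set ${\rm cl}_{T^{\max}}[I(U)]$ contains $0$, and hence so does their intersection. Second, $\hat{F}(T)$ is a \emph{closed} subset of $(\hat{X},T^{\max})$, being an intersection of the closed sets ${\rm cl}_{T^{\max}}[I(U)]$; this closedness is exactly what will let me pass from $K$-scalars to $\hat{K}$-scalars. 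I will use throughout that $(\hat{X},T^{\max})$ is a topological $\hat{K}$-vector space, so the three closure properties listed after Definition \ref{Minkowski sum} apply to it, and that $I$ is $K$-linear, so $I(A+B)=I(A)+I(B)$ and $I(\alpha A)=\alpha\,I(A)$ for $\alpha\in K$.

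To get closedness under addition, I would fix $x,y\in\hat{F}(T)$ and an arbitrary zero-neighborhood $U\in T$. By continuity of the addition of $(X,T)$ at $(0,0)$ there is a zero-neighborhood $V\in T$ with $V+V\subset U$, whence $I(V)+I(V)=I(V+V)\subset I(U)$. Since $x,y\in{\rm cl}_{T^{\max}}[I(V)]$, property 2 after Definition \ref{Minkowski sum}, applied in $(\hat{X},T^{\max})$, gives
$$
x+y \in {\rm cl}_{T^{\max}}[I(V)] + {\rm cl}_{T^{\max}}[I(V)] \subset {\rm cl}_{T^{\max}}[I(V)+I(V)] \subset {\rm cl}_{T^{\max}}[I(U)].
$$
As $U$ was arbitrary, $x+y\in\hat{F}(T)$.

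For closedness under scalar multiplication I would first treat scalars in $K$ and then extend to $\hat{K}$. Given $\alpha\in K$, $x\in\hat{F}(T)$ and a zero-neighborhood $U\in T$, continuity of the scalar multiplication of $(X,T)$ at $(\alpha,0)$ yields a zero-neighborhood $V\in T$ with $\alpha\cdot V\subset U$ (the case $\alpha=0$ being trivial since $0\in I(U)$). For $\alpha\neq 0$ the map $z\mapsto\alpha z$ is a self-homeomorphism of $(\hat{X},T^{\max})$, so it commutes with ${\rm cl}_{T^{\max}}$, and therefore $\alpha\cdot x\in\alpha\cdot{\rm cl}_{T^{\max}}[I(V)]={\rm cl}_{T^{\max}}[I(\alpha V)]\subset{\rm cl}_{T^{\max}}[I(U)]$; hence $\alpha\cdot x\in\hat{F}(T)$. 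To pass to a general $\hat{\alpha}\in\hat{K}$, I would consider the continuous map $m_x\colon\hat{K}\to\hat{X}$, $\hat{\beta}\mapsto\hat{\beta}\cdot x$ (a restriction of the scalar multiplication of $(\hat{X},T^{\max})$). The preimage $m_x^{-1}(\hat{F}(T))$ is closed in $\hat{K}$ because $\hat{F}(T)$ is closed in $\hat{X}$, and it contains $K$ by the previous step; since $K$ is dense in $\hat{K}$, this closed set must be all of $\hat{K}$, giving $\hat{\alpha}\cdot x\in\hat{F}(T)$.

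The step I expect to be the crux is this last extension from $K$- to $\hat{K}$-scalars: it is the one place where the non-completeness of $K$ really intervenes, and it is the reason the closure is taken in $T^{\max}$, so that $\hat{F}(T)$ is genuinely closed and the density argument applies. Once the three subspace axioms are verified in this way, the conclusion that $\hat{F}(T)\in\sigma_{\hat{K}}(\hat{X})$ is immediate.
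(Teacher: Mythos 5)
Your proof is correct. On closure under addition you and the paper do essentially the same thing: the paper's argument with zero-neighborhoods $V_1,V_2\in T^{\max}_{\hat{K}}(\hat{X})$ satisfying $V_1+V_2\subset V$ and the intersection points in $(x+V_1)\cap I(U_1)$, $(y+V_2)\cap I(U_2)$ is just an inline re-derivation of the inclusion ${\rm cl}[A]+{\rm cl}[B]\subset{\rm cl}[A+B]$ that you cite directly, so your version is a cleaner packaging of the same idea. The genuine difference is in the scalar-multiplication step. The paper handles an arbitrary $\hat{\alpha}\in\hat{K}$ in one shot: given a neighborhood $V$ of $\hat{\alpha}\cdot x$ in $T^{\max}_{\hat{K}}(\hat{X})$, it chooses open sets $L\ni\hat{\alpha}$ and $V'\ni x$ with $L\cdot V'\subset V$, uses density of $K$ in $\hat{K}$ to pick $q\in L\cap(K\setminus\{0\})$, and exhibits a point of $I(U)\cap V$ of the form $q\cdot I(q^{-1}\cdot u)$, using that $q^{-1}\cdot U$ is again a $T$-open zero-neighborhood. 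You instead split the step in two: first $K$-scalars, where multiplication by $\alpha\neq 0$ is a self-homeomorphism of $(\hat{X},T^{\max}_{\hat{K}}(\hat{X}))$ commuting with closures and $\alpha\cdot I(V)=I(\alpha\cdot V)$; then the extension to $\hat{K}$-scalars by a soft argument: $m_x^{-1}(\hat{F}(T))$ is closed because $\hat{F}(T)$ is an intersection of closed sets (a fact the paper never needs to isolate), it contains the dense subfield $K$, hence equals $\hat{K}$. Both arguments ultimately rest on the density of $K$ in $\hat{K}$, but the paper uses it pointwise inside a neighborhood-chasing estimate, while you use it structurally through continuity of $m_x$ and closedness of $\hat{F}(T)$. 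Your route is more modular and makes transparent why the closures must be taken in $T^{\max}_{\hat{K}}(\hat{X})$ (precisely so that $\hat{F}(T)$ is closed and the density argument can fire); the paper's route is more self-contained, working directly from the definition of closure without any auxiliary observation about $\hat{F}(T)$ itself.
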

\begin{proof}
Obviously, the zero of  $\hat{X}$ belongs to $\hat{F}(T)$.\par 
We show that $\hat{F}(T)$ is closed under the addition and the scalar multiplication of $\hat{X}$.
Take arbitrary elements $x,y$ from $\hat{F}(T)$ and fix an open neighborhood $U$ of zero of $X$. 
It suffices to show that for any zero's open neighborhood $V$ in $T_{\hat{K}}^{\max}(\hat{X})$, the intersection of $x+y+V$ and $I(U)$ is not empty, since a family $\{\,x+y+V \mid 0 \in V \in T_{\hat{K}}^{\max}(\hat{X})\,\}$ is a base of the neighborhood system at $x+y$ in $T_{\hat{K}}^{\max}(\hat{X})$.
By the continuity of the additions of $X$ and $\hat{X}$ with respect to $T$ and $T_{\hat{K}}^{\max}(\hat{X})$, respectively, there are open neighborhoods of zeros $T \ni U_1,U_2 \subset X$ and $T_{\hat{K}}^{\max}(\hat{X}) \ni V_1,V_2 \subset \hat{X}$ satisfying
$U_1 +U_2 \subset U$ and $V_1 +V_2 \subset V$. Because $x +V_1$ and $y +V_2$ are neighborhoods of $x$ and $y$, respectively, they intersect $I(U_1)$ and $I(U_2)$.
Hence $(x+y+V) \cap I(U)$ is not empty, containing $((x+V_1)\cap I(U_1)) + ((y+V_2)\cap I(U_2))$.\par
For the scalar multiplication, fix an element $\alpha \in \hat{K}$ and $x \in \hat{F}(T)$. If $\alpha$ is $0$, then
 $\alpha \cdot x$ is $0$ and belongs to $\hat{F}(T)$. If $\alpha$ is not $0$, for an arbitrary zero's open neighborhood $U \in T$ and neighborhood $V \in T_{\hat{K}}^{\max}(\hat{X})$ of $\alpha \cdot x$, there are open sets $L \subset \hat{K}$ and $V' \in T_{\hat{K}}^{\max}(\hat{X}) $ such that 
$\alpha \in L,~x \in V'$ and $L \cdot V' \subset V$ because of the continuity of the scaler multiplication. Since $K$ is a dense subset of the Hausdorff space $\hat{K}$, we have an element $q \in L \cap (K\setminus \{0\})$.
Now $I(q^{-1}\cdot U)$ intersects $V'$ because $q^{-1}\cdot U$ is an open neighborhood of $0 \in X$. Let $I(q^{-1} \cdot u)$ be an element of the intersection. $I(u) = q \cdot I(q^{-1}\cdot u) \in L \cdot V'$ belongs to $I(U) \cap V$.
Thus, $\alpha \cdot x$ belongs to ${\rm cl}_{T^{\max}(\hat{X})}[I(U)]$. Since $U$ is arbitrary, we have $\alpha \cdot x$ belongs to $\hat{F}(T)$ and $\hat{F}(T)$ is a $\hat{K}$-subspace.  
\end{proof} 

With the above preparation, our main theorem is stated as following way.

\begin{thm}\label{main thm}
We have the following:
\begin{enumerate}
\item $\hat{F}$ and $\hat{G}$ invert the inclusions in $\sigma_{\hat{K}}(\hat{X})$ and $\tau_K(X)$.
\item $\hat{F} \circ \hat{G} = {\rm id}_{\sigma_{\hat{K}}(\hat{X})}$.
\item $T \subset \hat{G} \circ \hat{F} (T)$ for all $T \in \tau_K(X)$.  
\item If $\hat{K}$ is a locally compact space with respect to $d_{\hat{\nu}}$, 
 then $\hat{G} \circ \hat{F} =  {\rm id}_{\tau_K(X)}$. Thus,  the lattice $(\tau_K(X),\subset)$ is lattice isomorphic to $(\sigma_{\hat{K}}(\hat{X}),\supset)$ by $\hat{F}$.
\end{enumerate}  
\end{thm}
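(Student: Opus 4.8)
The plan is to dispatch the four assertions in order, resting everything on three facts already in place: that $I(X)$ is dense in $(\hat X, T^{\max}_{\hat K}(\hat X))$ (choose a $K$-basis of $X$, whence $\hat X\cong\hat K^{n}$ carries the product topology by Proposition \ref{Bourbaki} and $I(X)\cong K^{n}$ sits densely inside), that $\hat F(T)$ is a closed $\hat K$-subspace (Lemma \ref{hat S is a subspace}), and that $G$ is the inclusion-reversing bijection of Lemma \ref{key lemma} whose zero-neighborhoods are exactly the saturated sets $W+S$ (Lemma \ref{prop of F and G_0}). Throughout, for $S\in\sigma_{\hat K}(\hat X)$ I write $\hat Y\coloneqq\hat X/S$, let $\pi_S\colon\hat X\to\hat Y$ be the quotient, and set $\phi\coloneqq\pi_S\circ I$; since $G(S)=\pi_S^{*}\big(T^{\max}_{\hat K}(\hat Y)\big)$, one gets $\hat G(S)=\phi^{*}\big(T^{\max}_{\hat K}(\hat Y)\big)$, the initial topology for $\phi$. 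Assertion (1) is pure monotonicity: enlarging $T$ enlarges the index set of the intersection defining $\hat F$, and $\hat G=I^{*}\circ G$ reverses inclusions because $G$ does while $I^{*}$ preserves them. For (2) I would compute $\hat F(\hat G(S))=\bigcap_{0\in W}\mathrm{cl}_{\hat X}[(W+S)\cap I(X)]$ over neighborhoods $W$ of $0$, using $I\big(I^{-1}(W+S)\big)=(W+S)\cap I(X)$; the inclusion $S\subseteq\hat F(\hat G(S))$ holds because $W+S$ is open and $I(X)$ dense, so $\mathrm{cl}[(W+S)\cap I(X)]=\mathrm{cl}[W+S]\supseteq S$, and the reverse inclusion follows from the Hausdorffness of $\hat Y$ (Proposition \ref{Hausdorff}): for $\bar x\notin S$, separate $\pi_S(\bar x)$ from $0$ by disjoint opens $O_1,O_2$ and take $W=\pi_S^{-1}(O_2)$, so the neighborhood $\pi_S^{-1}(O_1)$ of $\bar x$ misses $(W+S)\cap I(X)$.

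For (3) I specialize to $S=\hat F(T)$ and reduce $T\subseteq\hat G(\hat F(T))$, by translation invariance, to the claim that every $T$-neighborhood $U$ of $0$ contains a $\hat G(S)$-neighborhood of $0$, which unwinds to $0\notin\mathrm{cl}_{\hat X}[I(X\setminus U)+S]$. Suppose this fails: there is a net with $I(x_j)+s_j\to 0$, $x_j\notin U$, $s_j\in S$. Choosing a symmetric $U_1\in T$ with $U_1+U_1\subseteq U$ and using $S\subseteq\mathrm{cl}_{\hat X}[I(U_1)]$, I approximate $-s_j$ by $I(u_j)$ with $u_j\in U_1$ so that $I(x_j-u_j)\to 0$ in $\hat X$; since $x_j\notin U\supseteq U_1+U_1$, the points $v_j\coloneqq x_j-u_j$ lie outside $U_1$, so $0\in\mathrm{cl}_{\hat X}[I(X\setminus U_1)]$. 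This is impossible, because $U_1\in T\subseteq T^{\max}_K(X)=I^{*}\big(T^{\max}_{\hat K}(\hat X)\big)$ (the product-topology identification, valid for finite-dimensional $X$ since $\mathrm{id}\colon(X,T_{\mathrm{prod}})\to(X,T)$ is always continuous), hence $U_1=I^{-1}(\mathcal O)$ for some open $\mathcal O\ni 0$ in $\hat X$, and $\mathcal O$ is then a neighborhood of $0$ disjoint from $I(X\setminus U_1)$.

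Assertion (4) is the heart of the matter. By (3) it remains to prove $\hat G(\hat F(T))\subseteq T$, which by homogeneity is continuity of $\phi\colon(X,T)\to(\hat Y,T^{\max}_{\hat K}(\hat Y))$ at $0$. The key auxiliary fact, which needs no local compactness, is that $\bigcap_{0\in U\in T}\mathrm{cl}_{\hat Y}[\phi(U)]=\{0\}$: given $\bar y\neq0$, lift it to $\bar x\notin S$, pick $U_0$ with $\bar x\notin\mathrm{cl}_{\hat X}[I(U_0)]$, and for symmetric $U_1$ with $U_1+U_1\subseteq U_0$ combine the estimate $I(U_1)+S\subseteq\mathrm{cl}_{\hat X}[I(U_0)]$ with the identity $\pi_S^{-1}(\mathrm{cl}_{\hat Y}[A])=\mathrm{cl}_{\hat X}[\pi_S^{-1}(A)]$ for the open map $\pi_S$ to get $\bar y\notin\mathrm{cl}_{\hat Y}[\phi(U_1)]$. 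Local compactness enters only to upgrade this to genuine continuity: fix a norm on $\hat Y\cong\hat K^{m}$ and a scalar $t\in K$ with $0<\nu(t)<1$; if $\phi$ were discontinuous at $0$, I would find $x_U\in U$ with $\|\phi(x_U)\|>r$ for all neighborhoods $U$, rescale by a minimal power $t^{k_U}$ so that $\phi(t^{k_U}x_U)$ lands in the compact annulus $\{r\nu(t)\le\|\cdot\|\le r\}$, and extract a subnet with $\phi(t^{k_U}x_U)\to\bar y\neq0$; passing to a further subnet on which $k_U\to\infty$ or $k_U$ is constant, continuity of the scalar multiplication forces $t^{k_U}x_U\to0$ in $(X,T)$, so $\bar y\in\bigcap_U\mathrm{cl}_{\hat Y}[\phi(U)]=\{0\}$, a contradiction. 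I expect this rescaling-and-compactness step to be the main obstacle, since it is where the hypothesis on $\hat K$ is indispensable and where the two topologies must be handled at once.

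Combining (2) and (4) shows that $\hat F$ and $\hat G$ are mutually inverse bijections, and with (1) they reverse inclusions; hence, by the remark that an order-isomorphism between lattices is a lattice isomorphism, $\hat F$ exhibits $(\tau_K(X),\subseteq)$ as isomorphic to $(\sigma_{\hat K}(\hat X),\supseteq)$.
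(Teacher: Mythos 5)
Your proposal is correct, and while your parts (1)--(3) track the paper fairly closely, your proof of the decisive part (4) takes a genuinely different route. For (2) you verify both inclusions directly where the paper (Lemma \ref{identity on sigma}) reduces the claim to $F\circ G={\rm id}$, and for (3) you argue by contradiction with nets where the paper (Lemma \ref{T is contained in GF}) explicitly exhibits the open set $I^{-1}(V_0+\hat{F}(T))$ with $V_0={\rm int}[{\rm cl}[I(U_0)]]$; in both cases the underlying ingredients (density of $I(X)$, the description of $G(S)$-open sets as $W+S$ from Lemma \ref{prop of F and G_0}, and the fact that $I$ embeds $(X,T_K^{\max}(X))$ into $(\hat{X},T_{\hat{K}}^{\max}(\hat{X}))$, which is the paper's Lemma \ref{regular open} and your ``product-topology identification'') are the same. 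In (4), however, the paper relies on two pieces of machinery you never invoke: the dimension-counting Lemma \ref{good neighborhood}, which produces one neighborhood $U$ such that every subspace inside ${\rm cl}[I(U)]$ lies in $\hat{F}(T)$, and balanced neighborhoods (Lemma \ref{balanced}). These are forced on the paper because it proves an escape statement relative to a single fixed neighborhood $U'$ (for each $\hat{y}\neq 0$, every scalar of sufficiently large valuation pushes $\hat{y}$ out of ${\rm cl}[P\circ I(U')]$) and then converts compactness of the annulus into a uniform norm bound on $P({\rm cl}[I(N')])$ via a finite subcover. You instead prove $\bigcap_{0\in U\in T}{\rm cl}_{\hat{Y}}[\phi(U)]=\{0\}$ for your map $\phi=\pi_S\circ I$, which is much cheaper precisely because the neighborhood is allowed to depend on the point: your chain $I(U_1)+S\subset{\rm cl}[I(U_0)]$ combined with $\pi_S^{-1}({\rm cl}[A])={\rm cl}[\pi_S^{-1}(A)]$ for the open map $\pi_S$ does it with no dimension counting. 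You then use compactness of the annulus in its net form (minimal rescaling by $t^{k_U}$, a convergent subnet, and the case split $k_U$ constant versus $k_U\to\infty$) to contradict discontinuity at $0$. Your route is leaner, eliminating Lemmas \ref{good neighborhood} and \ref{balanced} entirely; the paper's route is more quantitative, yielding the explicit bound $\nu(\kappa^{M+1})$ and the explicit small neighborhoods $\lambda\cdot N'\subset V$. Both arguments consume local compactness at exactly one place, the compactness of a norm-annulus in $\hat{Y}$. In a final write-up you should record two facts you use silently, both routine: that $({\pi_S})_*T_{\hat{K}}^{\max}(\hat{X})=T_{\hat{K}}^{\max}(\hat{X}/S)$, which justifies $\hat{G}(S)=\phi^*(T_{\hat{K}}^{\max}(\hat{Y}))$, and that every net of nonnegative integers admits a subnet that is either constant or tends to infinity.
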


\begin{rmk}
In the forth statement of the theorem, it is known that all non-trivial locally compact topological fields are finite extension of either the field of real numbers $\mathbb{R}$, the fields of $p$-adic numbers $\mathbb{Q}_p$ or the fields of formal Laurent series $\mathbb{F}_p((t))$ over the finite fields $\mathbb{F}_p$, where $p$ ranges over prime numbers (See \cite[Theorem 22]{Pon} for a proof). Thus, the field $K$ is a subfield of one of them.  
\end{rmk}
\section{Proof of the Main Theorem}

We need several lemmas.

\begin{lem} \label{X is dense in hat X}
$I(X)$ is a dense subset of $\hat{X}$ with respect to arbitrary compatible topology $T \in \tau_{\hat{K}}(\hat{X})$.
\end{lem}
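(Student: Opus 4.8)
The plan is to reduce the statement to the defining density of $K$ in $\hat{K}$ by exhibiting a continuous coordinate map and pushing density through it. Fix a $K$-basis $\{b_1,\dots,b_n\}$ of $X$; then $\{1\otimes b_1,\dots,1\otimes b_n\}$ is a $\hat{K}$-basis of $\hat{X}=\hat{K}\otimes_K X$, so the coordinate map
\[
\phi:\hat{K}^n\ni(\beta_1,\dots,\beta_n)\mapsto\sum_{i=1}^n\beta_i\cdot(1\otimes b_i)\in\hat{X}
\]
is a $\hat{K}$-linear bijection. Its restriction to $K^n$ has image exactly $I(X)$, because for $\alpha_i\in K$ we have $\sum_i\alpha_i\cdot(1\otimes b_i)=1\otimes(\sum_i\alpha_i b_i)=I(\sum_i\alpha_i b_i)$, and every element of $X$ is of this form.

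The first key step is that $\phi$ is continuous from the product topology on $\hat{K}^n$ to $(\hat{X},T)$, and this is precisely where I would use that $T$ is \emph{compatible}. Each map $\hat{K}\ni\beta_i\mapsto\beta_i\cdot(1\otimes b_i)\in\hat{X}$ is continuous, being the composition of $\beta_i\mapsto(\beta_i,1\otimes b_i)$ with the scalar multiplication $\hat{K}\times\hat{X}\to\hat{X}$; then $\phi$, as the sum of these $n$ maps, is continuous by continuity of the addition of $(\hat{X},T)$. Note that Hausdorffness of $T$ is never needed, so the argument applies uniformly to every $T\in\tau_{\hat{K}}(\hat{X})$.

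The second key step is density of $K^n$ in $\hat{K}^n$. By construction of the metric completion, $K$ is dense in $\hat{K}$ with respect to $d_{\hat{\nu}}$, which is the topology of the topological field $\hat{K}$; since a finite product of dense subsets is dense in the product topology, we obtain $\mathrm{cl}[K^n]=\hat{K}^n$.

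Combining these, continuity of $\phi$ gives $\phi(\mathrm{cl}[K^n])\subseteq\mathrm{cl}_T[\phi(K^n)]$. The left-hand side equals $\phi(\hat{K}^n)=\hat{X}$ by surjectivity of $\phi$, and the right-hand side equals $\mathrm{cl}_T[I(X)]$; hence $\hat{X}\subseteq\mathrm{cl}_T[I(X)]$, which is exactly the asserted density. The only genuine obstacle is verifying continuity of $\phi$ from compatibility alone, as above; once that is in place, the rest is just the transfer of density through a continuous surjection together with the elementary density of $K$ in $\hat{K}$.
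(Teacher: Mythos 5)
Your proof is correct, but it follows a genuinely different route from the paper's. You parametrize $\hat{X}$ by the coordinate map $\phi:\hat{K}^n\rightarrow\hat{X}$ attached to a basis, observe that compatibility of $T$ alone (continuity of addition and scalar multiplication, no Hausdorffness) makes $\phi$ continuous into $(\hat{X},T)$, and then push the density of $K^n$ in $\hat{K}^n$ through $\phi$; this treats every compatible topology $T$ uniformly and in one step. The paper argues differently: it applies Lemma \ref{hat S is a subspace} to $X$ equipped with the \emph{indiscrete} topology $T_i$, so that $\hat{F}(T_i)={\rm cl}_{T^{\max}(\hat{X})}[I(X)]$ is a $\hat{K}$-subspace of $\hat{X}$; since $I(X)$ spans $\hat{X}$ over $\hat{K}$, this closure is all of $\hat{X}$, giving density with respect to $T^{\max}_{\hat{K}}(\hat{X})$, and density for an arbitrary compatible $T$ then follows because $T\subset T^{\max}_{\hat{K}}(\hat{X})$ implies ${\rm cl}_T[I(X)]\supset{\rm cl}_{T^{\max}(\hat{X})}[I(X)]$. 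The paper's route recycles machinery it has already built (the density of $K$ in $\hat{K}$ is hidden inside the proof of Lemma \ref{hat S is a subspace}) and leans on the maximal topology plus monotonicity of closures; your route is more self-contained and elementary, makes the role of the density of $K$ in $\hat{K}$ explicit, and is essentially the continuity half of the coordinate-map argument behind Proposition \ref{Bourbaki}, correctly noting that neither completeness nor Hausdorffness is needed for that half.
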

\begin{proof}
Let $\hat{x}$ be an element of $\hat{X}$. 
We represent $\hat{x}$ as 
$\sum_{k=1}^n \alpha_k \otimes x_k~~(\alpha_k \in \hat{K},x_k \in X)$. 
By applying Lemma \ref{hat S is a subspace} for $X$ endowed with the indiscrete topology, $T_i$, we obtain that $F(T_i) ={\rm cl}_{T^{\max}(\hat{X})}[I(X)]$ is a $\hat{K}$-subspace of $\hat{X}$.
Thus, ${\rm cl}_{T^{\max}(\hat{X})}[I(X)]$ is closed under the scalar multiplication and the addition, which  implies that $\hat{x}$ belongs to ${\rm cl}_{T^{\max}(\hat{X})}[I(X)]$. Thus, $I(X)$ is a dense subset in $\hat{X}$ with respect to $T_{\hat{K}}^{\max}(\hat{X})$ and it is also dense with respect to arbitrary weaker topology $T$ than $T_{\hat{K}}^{\max}(\hat{X})$.
\end{proof}

\begin{lem}\label{good neighborhood}
For each $T \in \tau_K(X)$, there is an open neighborhood $U \in T$ of zero in $X$, which satisfies the following property:
If a subspace $S$ of $\hat{X}$ is contained in ${\rm cl}_{T^{\max}}[I(U)]$, then $S \subset \hat{F}(T)$ holds.
\end{lem}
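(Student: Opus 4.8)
The plan is to exploit the scaling homogeneity of the construction together with finite-dimensionality. First I would record the identity ${\rm cl}_{T^{\max}}[I(qU)] = q\cdot{\rm cl}_{T^{\max}}[I(U)]$ for every $q \in K\setminus\{0\}$ and every $U \in T$: indeed multiplication by $q$ is a self-homeomorphism of $(\hat{X},T^{\max})$ that commutes with $I$, since $I(qu)=q\,I(u)$. The point of this identity is that if a subspace $S$ satisfies $S \subseteq {\rm cl}_{T^{\max}}[I(U)]$, then $S = qS \subseteq {\rm cl}_{T^{\max}}[I(qU)]$ for \emph{every} $q \in K\setminus\{0\}$, so that $S$ is contained in the closed cone
$$ V(U) := \bigcap_{q \in K\setminus\{0\}} {\rm cl}_{T^{\max}}[I(qU)] = \{\, v \in \hat{X} \mid \hat{K}\cdot v \subseteq {\rm cl}_{T^{\max}}[I(U)]\,\}. $$
As $U$ shrinks the sets $V(U)$ decrease, and since $\{\, qU \mid U\in T,\ q\in K\setminus\{0\}\,\}$ is again a neighborhood base of zero for $T$, their total intersection $\bigcap_{U} V(U)$ is exactly $\hat{F}(T)$ by Definition \ref{FS}. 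Thus the lemma reduces to producing a \emph{single} neighborhood $U_0 \in T$ with $V(U_0) \subseteq \hat{F}(T)$, for then any subspace $S \subseteq {\rm cl}_{T^{\max}}[I(U_0)]$ satisfies $S \subseteq V(U_0) \subseteq \hat{F}(T)$.

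To find such a $U_0$ I would pass to the Hausdorff quotient $\hat{Y} := \hat{X}/\hat{F}(T)$, equipped with $T^{\max}_{\hat{K}}(\hat{Y})$, which is Hausdorff and carries the standard topology by Propositions \ref{Hausdorff} and \ref{Bourbaki}; let $\pi:\hat{X} \to \hat{Y}$ be the projection. I would prove the stronger statement that $U_0$ can be chosen so that $\pi({\rm cl}_{T^{\max}}[I(U_0)])$ is \emph{bounded} in $\hat{Y}$. Granting boundedness, any subspace $S \subseteq {\rm cl}_{T^{\max}}[I(U_0)]$ has bounded image $\pi(S)$ in the Hausdorff space $\hat{Y}$; since a bounded subspace of a Hausdorff topological vector space over a non-trivial valuation field is $\{0\}$ (scaling any nonzero vector by $\hat{\nu}$-large scalars leaves every bounded set), we get $\pi(S) = \{0\}$, that is $S \subseteq \ker\pi = \hat{F}(T)$.

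The main obstacle is the construction of a neighborhood whose closure is bounded modulo $\hat{F}(T)$, and this is where finite-dimensionality and local compactness of $\hat{K}$ enter. I would argue by contradiction: if $\pi({\rm cl}_{T^{\max}}[I(U)])$ were unbounded for every $U \in T$, then for each $U$ there is $v_U \in V(U)$ with $\pi(v_U) \neq 0$, and using the $\hat{K}$-homogeneity of $V(U)$ I may normalize $v_U$ so that $\|\pi(v_U)\|$ lies in a fixed compact annulus of $\hat{Y}$. Because $\hat{K}$ is locally compact this annulus is compact, so the images $\pi(v_U)$ cluster at some $y^\ast \neq 0$. Once a cluster point $v^\ast$ of the net $\{v_U\}$ is obtained in $\hat{X}$, the monotonicity $v_U \in {\rm cl}_{T^{\max}}[I(U)] \subseteq {\rm cl}_{T^{\max}}[I(U_1)]$ for $U \subseteq U_1$ forces $v^\ast \in \bigcap_{U_1} {\rm cl}_{T^{\max}}[I(U_1)] = \hat{F}(T)$, contradicting $\pi(v^\ast) = y^\ast \neq 0$.

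I expect the genuine difficulty to be precisely upgrading the clustering of $\pi(v_U)$ to clustering of $v_U$ \emph{in $\hat{X}$}: the fibres of $\pi$ are translates of $\hat{F}(T)$ and are not compact, so one must prevent the $\hat{F}(T)$-component of $v_U$ from escaping in the limit. The naive remedy of subtracting off that component is unavailable because the sets ${\rm cl}_{T^{\max}}[I(U)]$ are \emph{not} additively closed; one only has invariance under the smaller subspace $\widehat{F(T)} = \hat{K}\otimes F(T)$, which follows from Lemma \ref{X is dense in hat X} applied to the $K$-subspace $F(T) = {\rm cl}_T[\{0\}]$ together with the $F(T)$-invariance of open sets in Lemma \ref{prop of F and G_0}. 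The resolution is to keep all the normalization in the compact unit annulus of $\hat{Y}$ rather than in $\hat{X}$, and to extract the cluster point along a subnet while controlling the transverse part; this is the technical heart of the argument.
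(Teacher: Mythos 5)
Your proposal is incomplete, and the gap sits exactly where you flag ``the technical heart of the argument'' --- so the proof, as written, does not close. Two concrete problems. First, the step ``if $\pi({\rm cl}_{T^{\max}}[I(U)])$ is unbounded for every $U$, then for each $U$ there is $v_U \in V(U)$ with $\pi(v_U)\neq 0$'' is unjustified: an unbounded closed set need not contain any line (a hyperbola already shows this), so extracting a point of $V(U)$ from unboundedness requires balanced neighborhoods (Lemma \ref{balanced}) plus a normalization argument, none of which you invoke. This particular gap is avoidable: negating your own reduced statement ``$V(U_0)\subseteq \hat{F}(T)$ for some $U_0$'' directly hands you $v_U \in V(U)\setminus \hat{F}(T)$, i.e.\ with $\pi(v_U)\neq 0$, and the boundedness detour is unnecessary. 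Second, and decisively, your unresolved cluster-point problem rests on a false premise: you assert that ${\rm cl}_{T^{\max}}[I(U)]$ is invariant only under the smaller subspace $\hat{K}\otimes F(T)$, but in fact it is invariant under all of $\hat{F}(T)$. Indeed, for $u \in U$ choose $U' \in T$ with $u+U' \subseteq U$; since $\hat{F}(T) \subseteq {\rm cl}_{T^{\max}}[I(U')]$ by definition of $\hat{F}$, we get $I(u)+\hat{F}(T) \subseteq {\rm cl}_{T^{\max}}[I(u+U')] \subseteq {\rm cl}_{T^{\max}}[I(U)]$, and taking closures gives ${\rm cl}_{T^{\max}}[I(U)]+\hat{F}(T) = {\rm cl}_{T^{\max}}[I(U)]$ (this is the paper's Lemma \ref{s invariant}, whose proof is independent of the present lemma, so there is no circularity). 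This is precisely what repairs your argument: each ${\rm cl}_{T^{\max}}[I(U)]$ is then saturated for $\pi$, so $\pi({\rm cl}_{T^{\max}}[I(U)])$ is closed in $\hat{Y}$ and $\pi^{-1}\bigl(\pi({\rm cl}_{T^{\max}}[I(U)])\bigr) = {\rm cl}_{T^{\max}}[I(U)]$; your normalized net $\pi(v_U)$ clusters in the compact annulus at some $y^{\ast}\neq 0$ which lies in every closed set $\pi({\rm cl}_{T^{\max}}[I(U_1)])$, and any lift of $y^{\ast}$ then lies in $\bigcap_{U_1}{\rm cl}_{T^{\max}}[I(U_1)] = \hat{F}(T)$, forcing $y^{\ast}=0$, a contradiction. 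No cluster point in $\hat{X}$ is ever needed; without the $\hat{F}(T)$-invariance, though, your argument cannot be completed, as you yourself concede.

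You should also know that the paper proves this lemma by an entirely different and more elementary route: a dimension count. For each $U$ set ${\rm Md}(U)$ equal to the maximal dimension of a $\hat{K}$-subspace contained in ${\rm cl}_{T^{\max}}[I(U)]$, pick $U_0$ minimizing ${\rm Md}$ and $U_1$ with $U_1+U_1 \subseteq U_0$; the inclusion ${\rm cl}[A]+{\rm cl}[B] \subseteq {\rm cl}[A+B]$ then forces every subspace inside ${\rm cl}_{T^{\max}}[I(U_1)]$ to lie in one maximal subspace $S_1$, and a short contradiction argument identifies $S_1 = \hat{F}(T)$. That proof uses only finite-dimensionality of $X$ and works over any non-trivial valuation field, whereas your route essentially builds in local compactness of $\hat{K}$ (compactness of the annulus in $\hat{Y}$). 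Since the lemma as stated carries no local compactness hypothesis, your argument --- even once repaired --- proves a strictly weaker statement; in spirit it rearranges the paper's proof of part 4 of Theorem \ref{main thm}, where boundedness modulo $\hat{F}(T)$ is \emph{deduced from} this lemma rather than used to prove it.
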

\begin{proof}
For a zero's open neighborhood $U \in T$, we define a non-negative integer ${\rm Md}(U)$ by
$$
{\rm Md}(U) \coloneqq \max\{\,\dim_{\hat{K}}(S) \mid \text{$S$ is a $\hat{K}$-subspace contained in ${\rm cl}_{T^{\max}}[I(U)]$}\,\}.
$$
Let $m$ be the minimum number of ${\rm Md}(U)$'s for all zero's open neighborhoods $U \in T$. We take a zero's open neighborhoods $U_0,U_1 \in T$ so that ${\rm Md}(U_0)$ attains $m$ and that $U_1+U_1  \subset U_0$ by the continuity of the addition at $(0,0)$.
Because $U_1 \subset U_0$, we have $m \leq {\rm Md}(U_1) \leq {\rm Md}(U_0)=m$ and hence we can take a subspace $S_1$ contained in ${\rm cl}_{T^{\max}}[I(U_1)]$ whose dimension is $m$. To prove the lemma, it suffices to show that every subspace $S$ contained in ${\rm cl}_{T^{\max}}[I(U_1)]$ is also contained in $S_1$ and that $S_1$ is equal to $\hat{F}(T)$. For the first claim, take a subspace $S$ which is contained in ${\rm cl}_{T^{\max}}[I(U_1)]$. Since $S_1$ and $S$ are contained in ${\rm cl}_{T^{\max}}[I(U_1)]$, a subspace $S+S_1$ is  contained in ${\rm cl}_{T^{\max}}[I(U_0)]$. Here we use the inclusion ${\rm cl}_{T^{\max}}[A] + {\rm cl}_{T^{\max}}[B] \subset{\rm cl}_{T^{\max}}[A+B]$, which is proved right after Definition \ref{Minkowski sum}. By the definition of ${\rm Md}(U_0)$, the subspace $S+S_1$ is $m$-dimensional, which contains the $m$-dimensional subspace $S_1$. Thus, $S+S_1$ is equal to $S_1$, which implies $S \subset S_1$. For the second claim, since $\hat{F}(T)$ is contained in ${\rm cl}_{T^{\max}}[I(U_1)]$, a subspace $\hat{F}(T)$ is contained in $S_1$ from the first claim. We prove the other inclusion by contradiction. Assume that we have an element $x$ from $S_1\setminus \hat{F}(T)$. By the definition of $\hat{F}(T)$, there is a zero's open neighborhood $U_2$ such that $x$ does not belong to ${\rm cl}_{T^{\max}}[I(U_2)]$. Take a subspace $S_2$ contained in ${\rm cl}_{T^{\max}}[I(U_1 \cap U_2)]$ whose dimension attains ${\rm Md}(U_1 \cap U_2)$. Since $S_2$ and $S_1$ is contained in ${\rm cl}_{T^{\max}}[I(U_1)]$, a subspace $S_1+S_2$ is contained in ${\rm cl}_{T^{\max}}[I(U_0)]$. Here we again use the inclusion ${\rm cl}_{T^{\max}}[A] +{\rm cl}_{T^{\max}}[B] \subset {\rm cl}_{T^{\max}}[A+B]$.  
Thus, we have the following inequality:
$$
m \leq {\rm Md}(U_1 \cap U_2) = \dim(S_2) \leq \dim (S_1+S_2) \leq {\rm Md}(U_0) = m. 
$$
This implies $S_1+S_2$ is equal to $S_2$, which is a contradiction because  $x \in S_1+S_2 = S_2 \subset {\rm cl}_{T^{\max}}[I(U_2)]$.     
\end{proof}

\begin{dfn}\label{good norms}
Let $\{\,b_1,b_2, \dots ,b_n\,\}$ be a $K$-linear basis of $X$.  Then, $\{\,I(b_1),I(b_2), \dots ,I(b_n)\,\}$ is a $\hat{K}$-linear basis of $\hat{X}$.
We define norms $\|\cdot\|_X$ and $\|\cdot\|_{\hat{X}}$ on $X$ and $\hat{X}$, respectively, defined by
\begin{align}
\left\|\sum_{k=1}^n \alpha_k \cdot b_k\right\|_X \coloneqq& \sum_{k=1}^n \nu(\alpha_k), \nonumber \\
\left\|\sum_{k=1}^n \hat{\alpha}_k \cdot I(b_k)\right\|_{\hat{X}} \coloneqq& \sum_{k=1}^n \hat{\nu}(\hat{\alpha}_k). \nonumber
\end{align}
 
\end{dfn}
Because $I: X \rightarrow \hat{X}$ preserves coefficients, $I$ preserves these norms.
Moreover, the topologies which these norms define coincide with $T_K^{\max}(X)$ and $T_{\hat{K}}^{\max}(\hat{X})$, respectively.
This is proved as follows:
Let $T_X$ be the topology which $\|\cdot\|_X$ defines.
It is easily checked that $T_X \in \tau_K(X)$. Thus, $T_X \subset T_K^{\max}(X)$ follows from the definition of $T_K^{\max}(X)$.
Next, let $V$ be a zero's neighborhood in $T_K^{\max}(X)$. By the continuity of the addition, there is a zero's open neighborhood $U \in T_K^{\max}(X)$ satisfying
$$
\underbrace{U + U + \cdots +U}_{n-times} \subset V.
$$
For each $k=1,2,\dots,n$, by the continuity of the scalar multiplication at points $(0,b_k) \in K \times X$, we have $\epsilon_k >0$ so that $\nu(\alpha) < \epsilon_k$ implies $\alpha \cdot b_k \in U$. We define $\epsilon \coloneqq \min\{\,\epsilon_k \mid k=1,2,\dots, n\,\}$. Then, an open ball $B\coloneqq\{\,x \in X \mid \|x\|_X < \epsilon \,\}$ is a zero's open neighborhood with respect to $T_X$ contained in $V$. Therefore, we have $T_K^{\max}(X) \subset T_X$. A similar argument shows that $\|\cdot\|_{\hat{X}}$ defines a topology which coincides with $T_{\hat{K}}^{\max}(\hat{X})$.

\begin{lem}\label{regular open}
Let $A$ be a subset of $X$. Then, the following equalities hold:
\begin{align}
I({\rm cl}_{T_K^{\max}(X)}[A]) =& I(X) \cap {\rm cl}_{T_{\hat{K}}^{\max}(\hat{X})}[I(A)],\nonumber\\
I({\rm int}_{T_K^{\max}(X)}[{\rm cl}_{T_K^{\max}(X)}[A]])=& I(X) \cap {\rm int}_{T_{\hat{K}}^{\max}(\hat{X})}[{\rm cl}_{T_{\hat{K}}^{\max}(\hat{X})}[I(A)]].\nonumber
\end{align}
\end{lem}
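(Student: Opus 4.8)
The plan is to exploit the normed structure from the discussion following Definition~\ref{good norms}: there the map $I$ is shown to preserve the norms $\|\cdot\|_X$ and $\|\cdot\|_{\hat{X}}$, whose norm topologies are precisely $T_K^{\max}(X)$ and $T_{\hat{K}}^{\max}(\hat{X})$. Hence $I$ is an isometric embedding, a homeomorphism onto $I(X)$ with the subspace topology, and for every $x \in X$ and $r>0$ the image of the open ball $B_X(x,r) \coloneqq \{\, z \in X \mid \|z-x\|_X < r\,\}$ satisfies the crucial ball identity $I(B_X(x,r)) = I(X) \cap B_{\hat{X}}(I(x),r)$, where $B_{\hat{X}}(I(x),r)$ is the corresponding ball in $\hat{X}$. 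Both topologies being metrizable, closures coincide with sequential closures. To abbreviate, since only the maximal topologies occur I write ${\rm cl}_X,{\rm int}_X$ (resp.\ ${\rm cl}_{\hat{X}},{\rm int}_{\hat{X}}$) for closure and interior with respect to $T_K^{\max}(X)$ (resp.\ $T_{\hat{K}}^{\max}(\hat{X})$). I would also use that $I(X)$ is dense in $\hat{X}$ by Lemma~\ref{X is dense in hat X}.

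For the first equality I would argue by convergent sequences. If $x \in {\rm cl}_X[A]$, choose $a_n \in A$ with $a_n \to x$; continuity of $I$ gives $I(a_n) \to I(x)$, so $I(x) \in I(X) \cap {\rm cl}_{\hat{X}}[I(A)]$. Conversely, if $I(x) \in I(X) \cap {\rm cl}_{\hat{X}}[I(A)]$, pick $a_n \in A$ with $I(a_n) \to I(x)$; since $I$ is an isometry, $\|a_n - x\|_X = \|I(a_n)-I(x)\|_{\hat{X}} \to 0$, so $a_n \to x$ and $x \in {\rm cl}_X[A]$. This is just the standard subspace-closure identity ${\rm cl}_{I(X)}[I(A)] = I(X) \cap {\rm cl}_{\hat{X}}[I(A)]$ transported through the homeomorphism $I$, and it establishes the first equality.

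For the second equality, set $C \coloneqq {\rm cl}_{\hat{X}}[I(A)]$, a closed subset of $\hat{X}$; by the first equality $I({\rm cl}_X[A]) = I(X) \cap C$. The inclusion $\supseteq$ is routine: if $I(x) \in I(X) \cap {\rm int}_{\hat{X}}[C]$, there is $r>0$ with $B_{\hat{X}}(I(x),r) \subseteq C$, and intersecting with $I(X)$ and using the ball identity gives $I(B_X(x,r)) = I(X)\cap B_{\hat{X}}(I(x),r) \subseteq I(X)\cap C = I({\rm cl}_X[A])$; injectivity of $I$ yields $B_X(x,r) \subseteq {\rm cl}_X[A]$, so $x \in {\rm int}_X[{\rm cl}_X[A]]$.

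The main obstacle is the reverse inclusion, where a ball known to lie in the closure only along the dense subset $I(X)$ must be promoted to a full ball of $\hat{X}$ contained in $C$. Given $x \in {\rm int}_X[{\rm cl}_X[A]]$, fix $r>0$ with $B_X(x,r) \subseteq {\rm cl}_X[A]$, so by the ball identity $I(X) \cap B_{\hat{X}}(I(x),r) = I(B_X(x,r)) \subseteq I(X) \cap C \subseteq C$. For an arbitrary $\hat{y} \in B_{\hat{X}}(I(x),r)$, density of $I(X)$ furnishes $y_n \in X$ with $I(y_n) \to \hat{y}$; since $\|\hat{y}-I(x)\|_{\hat{X}}<r$, for large $n$ we have $I(y_n) \in I(X) \cap B_{\hat{X}}(I(x),r) \subseteq C$, and as $C$ is closed it follows that $\hat{y} \in C$. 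Therefore $B_{\hat{X}}(I(x),r) \subseteq C$, i.e.\ $I(x) \in I(X) \cap {\rm int}_{\hat{X}}[C]$, which completes the proof. The only subtlety to verify carefully is that the radius $r$ transfers verbatim from $X$ to $\hat{X}$, and this is exactly what the isometry together with the ball identity $I(B_X(x,r)) = I(X) \cap B_{\hat{X}}(I(x),r)$ guarantees.
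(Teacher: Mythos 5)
Your proof is correct and takes essentially the same approach as the paper: both rely on the norms of Definition~\ref{good norms} and the fact that $I$ is a norm-preserving embedding to obtain the first equality, and both use the density of $I(X)$ in $\hat{X}$ (Lemma~\ref{X is dense in hat X}) to promote the containment of the ball along $I(X)$ to a full ball of $\hat{X}$ for the second. The only cosmetic difference is that you argue the easy inclusions with balls and sequences where the paper invokes the continuity of $I$ and preimages of open sets directly.
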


\begin{proof}
Because $I:(X,T_K^{\max}(X)) \rightarrow (\hat{X},T_{\hat{K}}^{\max}(\hat{X}))$ is a continuous $K$-linear map, an inclusion 
$$
I({\rm cl}_{T^{\max}(X)}[A]) \subset I(X) \cap {\rm cl}_{T^{\max}(\hat{X})}[I(A)]
$$
 holds. 
To prove the other inclusion, we take norms defined in Definition \ref{good norms} denoted by $\|\cdot\|_X$ and $\|\cdot\|_{\hat{X}}$.
Take an element from $I(X) \cap {\rm cl}_{T^{\max}(\hat{X})}[I(A)]$ denoted by $I(x)$. Since $I(x)$ belongs to the closure of $I(A)$, there is a sequence $\{a_n\}_{n=1}^{\infty}$ in $A$ with $\| I(x) -I(a_n) \|_{\hat{X}} \rightarrow 0~(n \rightarrow \infty)$.
Since $I$ preserves the norms, $\| x - a_n \|_X$ converges to $0$, which implies that $x$ is in the closure of $A$ and that $I(x)$ is in $I({\rm cl}_{T^{\max}(X)}[A])$. \par
For the second claim, from the injectivity of $I$ and from the first equality, we have ${\rm cl}_{T^{\max}(X)}[A] = I^{-1}({\rm cl}_{T^{\max}(\hat{X})}[I(A)])$. This equality and the continuity of $I:(X,T_K^{\max}(X)) \rightarrow (\hat{X},T_{\hat{K}}^{\max}(\hat{X}))$ imply that the set 
$$
I^{-1}({\rm int}_{T^{\max}(\hat{X})}[{\rm cl}_{T^{\max}(\hat{X})}[I(A)]])
$$
 is open that is contained in ${\rm cl}_{T^{\max}(X)}[A]$. Therefore, 
$$
{\rm int}_{T^{\max}(X)}[{\rm cl}_{T^{\max}(X)}[A]]  \supset I^{-1}({\rm int}_{T^{\max}(\hat{X})}[{\rm cl}_{T^{\max}(\hat{X})}[I(A)]])
$$
 holds and, by taking the image of $I$, we have the inclusion $\supset$ in the second claim. Next, for an element $I(x)$ from $I({\rm int}_{T^{\max}(X)}[{\rm cl}_{T^{\max}(X)}[A]])$, we have an open ball $B_X(x,\epsilon)$, defined by the norm $\|\cdot\|_X$, whose center is $x$ and radius is $\epsilon$ contained in ${\rm int}_{T^{\max}(X)}[{\rm cl}_{T^{\max}(X)}[A]]$. From the first equality and from $I$ being an isometry, $B_{\hat{X}}(I(x),\epsilon) \cap I(X) = I(B_X(x,\epsilon)) \subset {\rm cl}_{T^{\max}(\hat{X})}[I(A)]$ holds, where $B_{\hat{X}}(I(x),\epsilon)$ is an open ball whose center is $I(x)$, radius is $\epsilon$ with respect to the norm $\|\cdot\|_{\hat{X}}$. By Lemma \ref{X is dense in hat X}, $I(X)$ is a dense subset of $\hat{X}$, which implies that $B_{\hat{X}}(I(x),\epsilon) \subset {\rm cl}_{T^{\max}(\hat{X})}[B_{\hat{X}}(I(x),\epsilon) \cap I(X)] \subset {\rm cl}_{T^{\max}(\hat{X})}[I(A)]$. Therefore, we have the other inclusion $\subset$ in the second claim.     
\end{proof}

\begin{lem}\label{identity on sigma}
We have $\hat{F} \circ \hat{G} = {\rm id}_{\sigma_{\hat{K}}({\hat{X}})}$.
\end{lem}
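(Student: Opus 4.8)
The plan is to compute $\hat{F}(\hat{G}(S))$ explicitly and show that it collapses to $S$. Throughout I abbreviate $T_{\hat{K}}^{\max}(\hat{X})$ to $T^{\max}$. A useful sanity check guiding the argument is that statement~2 of Theorem~\ref{main thm} is unconditional, so I should expect a proof that does \emph{not} invoke local compactness of $\hat{K}$; only the completeness of $\hat{K}$ and density of $I(X)$ should be needed.

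First I would unwind the two definitions. By part~2 of Lemma~\ref{prop of F and G_0}, $G(S)=G_0(S,T^{\max})=\{\,V+S\mid V\in T^{\max}\,\}$. A set $V+S$ contains $0$ exactly when $0\in V+S$, and in that case, writing $0=v_0+s_0$ with $v_0\in V$ and $s_0\in S$, the translate $V'\coloneqq V-v_0$ is $T^{\max}$-open, contains $0$, and satisfies $V'+S=V+S$ (because $-v_0=s_0\in S$). Hence the open neighborhoods of $0$ in $\hat{G}(S)=I^*(G(S))$ are precisely the sets $I^{-1}(V+S)$ with $0\in V\in T^{\max}$, and $I\bigl(I^{-1}(V+S)\bigr)=I(X)\cap(V+S)$. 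Therefore
$$
\hat{F}(\hat{G}(S))=\bigcap_{0\in V\in T^{\max}}{\rm cl}_{T^{\max}}[\,I(X)\cap(V+S)\,].
$$

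Second, I would remove $I(X)$ from inside the closure. Each $V+S=\bigcup_{s\in S}(V+s)$ is $T^{\max}$-open, and $I(X)$ is $T^{\max}$-dense by Lemma~\ref{X is dense in hat X}; the standard fact that ${\rm cl}[D\cap W]={\rm cl}[W]$ for an open set $W$ and a dense set $D$ then gives ${\rm cl}_{T^{\max}}[I(X)\cap(V+S)]={\rm cl}_{T^{\max}}[V+S]$. So the task reduces to proving $\bigcap_{0\in V}{\rm cl}_{T^{\max}}[V+S]=S$. The inclusion $\supseteq$ is immediate since $S\subseteq V+S\subseteq{\rm cl}_{T^{\max}}[V+S]$ whenever $0\in V$. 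For $\subseteq$ I would use the norm $\|\cdot\|_{\hat{X}}$ of Definition~\ref{good norms}, which generates $T^{\max}$: the map $z\mapsto d(z,S)=\inf_{s\in S}\|z-s\|_{\hat{X}}$ is continuous, so if $V$ is the open ball of radius $r$ then every $z=v+s\in V+S$ has $d(z,S)\le\|v\|_{\hat{X}}<r$, whence ${\rm cl}_{T^{\max}}[V+S]\subseteq\{\,z\mid d(z,S)\le r\,\}$. Since $S$ is a finite-dimensional subspace of the Hausdorff space $(\hat{X},T^{\max})$ over the complete field $\hat{K}$, Proposition~\ref{Bourbaki} applied to $S$ shows $S$ is complete, hence $T^{\max}$-closed; thus $d(x,S)>0$ for every $x\notin S$, and choosing $r<d(x,S)$ exhibits $x\notin{\rm cl}_{T^{\max}}[V+S]$. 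Combining the two inclusions gives $\bigcap_{0\in V}{\rm cl}_{T^{\max}}[V+S]={\rm cl}_{T^{\max}}[S]=S$, which is the desired equality.

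I expect the main difficulty to be bookkeeping rather than genuine depth. The two places that require care are step one, where I must correctly identify the zero-neighborhoods of the coinduced topology $\hat{G}(S)$ as exactly the $I^{-1}(V+S)$ with $0\in V$, and step two, where the replacement of $I(X)\cap(V+S)$ by $V+S$ inside the closure must be justified through density together with the openness of $V+S$. Once $S$ is known to be $T^{\max}$-closed, the metric estimate in step three is routine and, as anticipated, uses only the completeness of $\hat{K}$ via Proposition~\ref{Bourbaki}, not local compactness.
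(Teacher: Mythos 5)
Your proof is correct and, after a common first step, takes a genuinely different route from the paper's. Both arguments begin by unwinding $\hat{G}(S)$ through part 2 of Lemma \ref{prop of F and G_0}, arriving at $\hat{F}(\hat{G}(S))=\bigcap_{0\in V\in T^{\max}}{\rm cl}_{T^{\max}}[I(X)\cap(V+S)]$ (the paper writes this as $\bigcap_{0\in V\in\hat{T}}{\rm cl}_{T^{\max}}[I(X)\cap V]$ with $\hat{T}\coloneqq G(S)$). From there the paper keeps $I(X)$ inside the closures and shows the intersection equals $F(\hat{T})$, the closure of zero in the strip topology: the inclusion $F(\hat{T})\subset\bigcap$ follows from density of $I(X)$, and the reverse inclusion from continuity of addition with respect to $\hat{T}$ (choose $V_1-V_2\subset V'$ and write $x=x'-(x'-x)$); it then concludes by the already-established identity $F\circ G={\rm id}$ of Lemma \ref{key lemma}, whose proof rests on Hausdorffness of the quotient $(\hat{X}/S,T^{\max}(\hat{X}/S))$. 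You instead discard $I(X)$ immediately by the dense-set/open-set fact, reducing to $\bigcap_{0\in V}{\rm cl}_{T^{\max}}[V+S]=S$, and settle that with a quantitative estimate: with $d(z,S)=\inf_{s\in S}\|z-s\|_{\hat{X}}$ one has ${\rm cl}_{T^{\max}}[B(0,r)+S]\subset\{\,z\mid d(z,S)\le r\,\}$, which forces the intersection into ${\rm cl}_{T^{\max}}[S]=S$ once $S$ is known to be closed. So Proposition \ref{Bourbaki} enters both proofs but in different roles: the paper uses it to know $\tau^H_{\hat{K}}(\hat{X})$ is a singleton so that Lemma \ref{key lemma}.2 applies; you use it to know $S$ is closed. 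Your version is more self-contained and metric in flavor (it never touches the strip topology or the quotient space, only the fixed norm of Definition \ref{good norms}), while the paper's leans on machinery it has already built and needs elsewhere.

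One step should be made explicit, since completeness is not a topological invariant: Proposition \ref{Bourbaki} gives a linear \emph{homeomorphism} $\hat{K}^{\dim S}\to S$, and to conclude that $S$ is complete (hence closed) in the metric of $\|\cdot\|_{\hat{X}}$ you need the additional observation that a continuous linear bijection with continuous inverse between normed spaces over the non-trivially valued complete field $\hat{K}$ is bi-Lipschitz (continuous linear maps between such normed spaces are bounded), so Cauchy sequences correspond to Cauchy sequences. Equivalently, cite the standard corollary of Bourbaki's theorem that every finite-dimensional subspace of a Hausdorff topological vector space over a complete, non-trivially valued field is closed. With that sentence added, your argument is complete and, as you predicted, uses only completeness of $\hat{K}$ and density of $I(X)$, not local compactness --- in agreement with the unconditional status of statement 2 of Theorem \ref{main thm}.
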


\begin{proof}
Let $S$ be a $\hat{K}$-linear subspace of $\hat{X}$ and put $\hat{T} \coloneqq G(S)$, where $G:\sigma_{\hat{K}}(\hat{X}) \rightarrow \tau_{\hat{K}}(\hat{X})$ is the strip map. It suffices to show that $\hat{F} \circ \hat{G}(S)$ coincides with $F(\hat{T})$ because we have $F(\hat{T}) = F \circ G(S) = S$ by Lemma \ref{key lemma} and Proposition \ref{Bourbaki}. 
First, the definitions of $\hat{F}$ and $\hat{G}$ show that:
\begin{align}
\hat{F} \circ \hat{G}(S) =& \bigcap_{0 \in U \in \hat{G}(S)}{\rm cl}_{T^{\max}(\hat{X})}[I(U)] \nonumber \\
                                =& \bigcap_{0 \in V \in \hat{T}}{\rm cl}_{T^{\max}(\hat{X})}[I(X) \cap V] \nonumber.
\end{align}
Next, for all zero's open neighborhood $V$ in $\hat{T}$, because $I(X)$ is a dense subset of $\hat{X}$ and, because $V \in \hat{T} \subset T^{\max}(\hat{X})$ holds, we have $V \subset {\rm cl}_{T^{\max}(\hat{X})}[I(X) \cap V]$. Therefore, $F(\hat{T}) \subset \bigcap_{0 \in V \in \hat{T}}{\rm cl}_{T^{\max}(\hat{X})}[I(X) \cap V]$. We show the opposite inclusion. Let $x$ be an element of $\bigcap_{0 \in V \in \hat{T}}{\rm cl}_{T^{\max}(\hat{X})}[I(X) \cap V]$ and $V'$ be a zero's open neighborhood with respect to $\hat{T}$. 
Because of the continuity of the addition with respect to $\hat{T}$, there are zero's open neighborhoods $V_1,V_2 \in \hat{T}$ such that $V_1 -V_2$ is contained in $V'$. Then, $x$ belongs to ${\rm cl}_{T^{\max}(\hat{X})}[I(X) \cap V_1]$, and thus, we can take an element $x'$ from the intersection of $V_2 +x$ and $I(X) \cap V_1$. Then, $x$ is represented as $x = x' - (x'-x)$ and this implies that $x \in V_1 -V_2 \subset V'$. Since $V'$ is an arbitrary zero's open neighborhood with respect to $\hat{T}$, we have $F(\hat{T}) = \bigcap_{0 \in V \in \hat{T}}{\rm cl}_{T^{\max}(\hat{X})}[I(X) \cap V]$.  
\end{proof}

\begin{lem}\label{T is contained in GF}
We have $T \subset \hat{G} \circ \hat{F}(T)$ for all $T \in \tau_K(X)$.
\end{lem}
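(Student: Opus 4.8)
The plan is to prove the stronger pointwise statement that every $T$-neighborhood of $0$ is also a $\hat{G}\circ\hat{F}(T)$-neighborhood of $0$; since every topology in $\tau_K(X)$ is translation invariant (the shift maps are homeomorphisms), this comparison of the neighborhood filters at $0$ yields $T\subseteq\hat{G}\circ\hat{F}(T)$. Write $S\coloneqq\hat{F}(T)$. By part 2 of Lemma \ref{prop of F and G_0}, the strip topology $G(S)$ is the family $\{\,W+S\mid W\in T_{\hat{K}}^{\max}(\hat{X})\,\}$, so for every $\delta>0$ the set $I^{-1}(B_{\hat{X}}(0,\delta)+S)$ is a $\hat{G}(S)=\hat{G}\circ\hat{F}(T)$-neighborhood of $0$, where $B_{\hat{X}}(0,\delta)$ is the ball for the norm $\|\cdot\|_{\hat{X}}$ of Definition \ref{good norms}. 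As $I^{-1}(B_{\hat{X}}(0,\delta)+S)=\{\,x\in X\mid {\rm dist}_{\hat{X}}(I(x),S)<\delta\,\}$, the whole lemma reduces to one estimate: given a $T$-neighborhood $U$ of $0$, to find $\delta>0$ with $\{\,x\mid {\rm dist}_{\hat{X}}(I(x),S)<\delta\,\}\subseteq U$.

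To set up the estimate I would first shrink $U$. By continuity of the addition I choose $T$-neighborhoods of $0$ with $U'+U'\subseteq U$ and $U_1+U_1\subseteq U'$; the elementary topological-group inclusion ${\rm cl}_T[U']\subseteq U'+U'$ (valid because $-U'$ is again a neighborhood of $0$) together with $T\subseteq T_K^{\max}(X)$ gives ${\rm cl}_{T_K^{\max}(X)}[U']\subseteq U$. The geometric heart of the argument is that a full $T_{\hat{K}}^{\max}(\hat{X})$-ball already lies in ${\rm cl}_{T_{\hat{K}}^{\max}(\hat{X})}[I(U_1)]$. Indeed, $U_1$ is open for the norm topology $T_K^{\max}(X)$, hence contains some $B_X(0,\delta)$; since $I$ is an isometry (Definition \ref{good norms}) we have $I(B_X(0,\delta))=I(X)\cap B_{\hat{X}}(0,\delta)$, and since $I(X)$ is dense (Lemma \ref{X is dense in hat X}) every point of $B_{\hat{X}}(0,\delta)$ is a $T_{\hat{K}}^{\max}(\hat{X})$-limit of such points, so $B_{\hat{X}}(0,\delta)\subseteq{\rm cl}_{T_{\hat{K}}^{\max}(\hat{X})}[I(X)\cap B_{\hat{X}}(0,\delta)]\subseteq{\rm cl}_{T_{\hat{K}}^{\max}(\hat{X})}[I(U_1)]$.

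I would then combine three inclusions: the ball inclusion just obtained, the inclusion $S=\hat{F}(T)\subseteq{\rm cl}_{T_{\hat{K}}^{\max}(\hat{X})}[I(U_1)]$ (immediate from the definition of $\hat{F}$ in Definition \ref{FS}, taking the single neighborhood $U_1$), and the subadditivity of closures ${\rm cl}[A]+{\rm cl}[B]\subseteq{\rm cl}[A+B]$ recorded after Definition \ref{Minkowski sum}. Because $I$ is additive and $U_1+U_1\subseteq U'$, these give $S+B_{\hat{X}}(0,\delta)\subseteq{\rm cl}[I(U_1)]+{\rm cl}[I(U_1)]\subseteq{\rm cl}[I(U_1+U_1)]\subseteq{\rm cl}_{T_{\hat{K}}^{\max}(\hat{X})}[I(U')]$. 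Hence if ${\rm dist}_{\hat{X}}(I(x),S)<\delta$ then $I(x)\in S+B_{\hat{X}}(0,\delta)$ belongs to $I(X)\cap{\rm cl}_{T_{\hat{K}}^{\max}(\hat{X})}[I(U')]$, which equals $I({\rm cl}_{T_K^{\max}(X)}[U'])$ by the first equality of Lemma \ref{regular open}; injectivity of $I$ yields $x\in{\rm cl}_{T_K^{\max}(X)}[U']\subseteq U$, the desired estimate.

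I expect the single substantive point to be the ball inclusion $B_{\hat{X}}(0,\delta)\subseteq{\rm cl}[I(U_1)]$, equivalently that the balls of the seminorm $x\mapsto{\rm dist}_{\hat{X}}(I(x),S)$ refine the $T$-neighborhoods of $0$. This is not a formality: those seminorm balls are neighborhoods not of $0$ but of the whole subspace $I^{-1}(S)$ for $T_K^{\max}(X)$, so a priori nothing forces them inside $U$; what makes it work is that $U$ is genuinely $T$-open (not merely $T_K^{\max}(X)$-open), exploited through the closure-subadditivity and the isometric density of $I(X)$. The auxiliary shrinking of $U$ to $U'$ with ${\rm cl}_{T_K^{\max}(X)}[U']\subseteq U$ is what upgrades the conclusion from membership in the $T_K^{\max}(X)$-closure of $U'$ to membership in $U$ itself, and is exactly where Lemma \ref{regular open} enters. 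Notably, this route uses completeness of $\hat{K}$ only through the norm and density machinery and never invokes local compactness, consistent with this being the part of the main theorem valid for all such $K$.
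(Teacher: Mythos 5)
Your proof is correct and follows essentially the same route as the paper's: both reduce to neighborhoods of zero and place inside a given $U\in T$ a $\hat{G}\circ\hat{F}(T)$-open set of the form $I^{-1}(W+\hat{F}(T))$, where $W$ is a $T_{\hat{K}}^{\max}(\hat{X})$-open neighborhood of zero contained in ${\rm cl}_{T^{\max}(\hat{X})}[I(U_1)]$ for a suitably shrunken $U_1$, and then conclude via the closure subadditivity ${\rm cl}[A]+{\rm cl}[B]\subset{\rm cl}[A+B]$ and the first equality of Lemma \ref{regular open}. The only cosmetic difference is that you produce $W$ as a norm ball $B_{\hat{X}}(0,\delta)$ using density of $I(X)$ and the isometry property of $I$, whereas the paper takes $W={\rm int}_{T^{\max}(\hat{X})}[{\rm cl}_{T^{\max}(\hat{X})}[I(U_0)]]$ and invokes the second equality of Lemma \ref{regular open} to see that it contains zero.
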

\begin{proof}
Since the addition of $X$ is continuous with respect to $T$ and $\hat{G} \circ \hat{F}(T)$, it is enough to compare neighborhoods of zero to show this lemma. More precisely, we show that for each zero's open neighborhood $U \in T$, there is an zero's open neighborhood $U' \in \hat{G} \circ \hat{F}(T)$ contained in $U$. By the continuity of the addition, zero has an open neighborhood $U_0 \in T$ satisfying $U_0+U_0+U_0 \subset U$.  By Lemma \ref{regular open},
 $V_0 \coloneqq{\rm int}_{T^{\max}(\hat{X})}[{\rm cl}_{T^{\max}(\hat{X})}[I(U_0)]]$ contains $I({\rm int}_{T^{\max}(X)}[{\rm cl}_{T^{\max}(X)}[U_0]])$, and hence zero belongs to $V_0$. We define $V_1$ and $U'$ by
\begin{align}
V_1 \coloneqq& V_0 + \hat{F}(T),\nonumber \\
U'   \coloneqq& I^{-1}(V_1). \nonumber
\end{align}
The definition of $U'$ and 2 of Lemma \ref{prop of F and G_0} implies that $V_1$ is an open neighborhood of zero with respect to $G(\hat{F}(T))$ and that $U'$ is an open neighborhood of zero in $\hat{G}(\hat{F}(T))$. We show that $U'$ is contained in $U$.  Since ${\rm cl}_{T^{\max}(\hat{X})}[I(U_0)]$ includes $\hat{F}(T)$, we have 
$$
V_1 \subset V_0 + {\rm cl}_{T^{\max}(\hat{X})}[I(U_0)] \subset {\rm cl}_{T^{\max}(\hat{X})}[I(U_0+U_0)],
$$
where we use the inclusion ${\rm cl}_{T^{\max}(\hat{X})}[A] + {\rm cl}_{T^{\max}(\hat{X})}[B] \subset {\rm cl}_{T^{\max}(\hat{X})}[A+B]$  that is proved right after Definition \ref{Minkowski sum}. 
By taking the intersection with $I(X)$ and by Lemma \ref{regular open}, we have the following:
$$
V_1 \cap I(X) \subset {\rm cl}_{T^{\max}(\hat{X})}[I(U_0)+I(U_0)] \cap I(X) = I({\rm cl}_{T^{\max}(X)}[U_0+U_0]) \subset I(U_0)+I(U_0)+I(U_0) \subset I(U).
$$
By taking the inverse image of $I$, we can deduce that $U' \subset U$.
\end{proof}

\begin{lem}\label{s invariant}
For $T \in \tau_K(X)$ and $U \in T$, a set ${\rm cl}_{T^{\max}(\hat{X})}[I(U)]$ is $\hat{F}(T)$-invariant, that is, the equality,
$$
{\rm cl}_{T^{\max}(\hat{X})}[I(U)] + \hat{F}(T) = {\rm cl}_{T^{\max}(\hat{X})}[I(U)]
$$
holds. 
\end{lem}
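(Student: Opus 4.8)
The inclusion $\supset$ is immediate and carries no content: by Lemma \ref{hat S is a subspace} the set $\hat{F}(T)$ is a $\hat{K}$-subspace, hence contains $0$, so ${\rm cl}_{T^{\max}(\hat{X})}[I(U)] = {\rm cl}_{T^{\max}(\hat{X})}[I(U)] + \{0\} \subset {\rm cl}_{T^{\max}(\hat{X})}[I(U)] + \hat{F}(T)$. The whole point of the lemma is the reverse inclusion, and my plan is to establish it by approximating the two summands separately and gluing the approximants \emph{inside} $U$. Concretely, I would fix $z \in {\rm cl}_{T^{\max}(\hat{X})}[I(U)]$ and $y \in \hat{F}(T)$ and show $z + y \in {\rm cl}_{T^{\max}(\hat{X})}[I(U)]$. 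Since translations are self-homeomorphisms of $(\hat{X},T^{\max}(\hat{X}))$, it suffices to verify that every basic neighborhood $z + y + V$ of $z+y$, with $V$ a zero-neighborhood in $T^{\max}(\hat{X})$, meets $I(U)$; and by continuity of the addition of $\hat{X}$ at $(0,0)$ I would first pick a zero-neighborhood $V_1 \in T^{\max}(\hat{X})$ with $V_1 + V_1 \subset V$.

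The core of the argument produces one point of $I(U)$ inside $z+y+V$. First, since $z$ lies in the closure of $I(U)$, the neighborhood $z+V_1$ meets $I(U)$, so I can choose $u_0 \in U$ with $I(u_0) - z \in V_1$. Here is the crucial use of openness: because $U \in T$ is open and $u_0 \in U$, continuity of the addition of $X$ supplies a \emph{zero}-neighborhood $U' \in T$ with $u_0 + U' \subset U$. The definition of $\hat{F}(T) = \bigcap_{0 \in U' \in T}{\rm cl}_{T^{\max}(\hat{X})}[I(U')]$ then guarantees $y \in {\rm cl}_{T^{\max}(\hat{X})}[I(U')]$ for exactly this $U'$, so the neighborhood $y+V_1$ meets $I(U')$ and I may choose $u' \in U'$ with $I(u') - y \in V_1$. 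Now $u_0 + u' \in u_0 + U' \subset U$, and by $K$-linearity of $I$,
$$
I(u_0 + u') - (z+y) = (I(u_0) - z) + (I(u') - y) \in V_1 + V_1 \subset V,
$$
so $I(u_0+u') \in (z+y+V) \cap I(U)$. As $V$ was arbitrary, $z+y \in {\rm cl}_{T^{\max}(\hat{X})}[I(U)]$, which completes the reverse inclusion.

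The delicate point—the step I expect to require the most care—is the interplay between the openness of $U$ in $X$ and the structure of $\hat{F}(T)$ as an intersection over \emph{all} zero-neighborhoods. One cannot directly approximate $y$ by elements of $I(U)$; instead one must shrink and translate, choosing $U'$ so that $u_0+U'$ sits inside $U$, and then exploit that $\hat{F}(T)$ is small enough to be captured by $I(U')$ no matter how small $U'$ is. Transporting the approximation of $y$ from $I(U')$ back into $I(U)$ via the shift by $u_0$ is precisely what makes the gluing land in $I(U)$. Everything else is routine splitting of neighborhoods using the continuity of addition, and the overall scheme mirrors the proof of the $F(T)$-invariance of open sets in part 1 of Lemma \ref{prop of F and G_0}, now carried out at the level of $T^{\max}(\hat{X})$-closures of images under $I$.
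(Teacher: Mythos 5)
Your proof is correct and rests on exactly the same key idea as the paper's own argument: shrink $U$ around a point $u_0 \in U$ using the continuity of the addition of $(X,T)$ to obtain a zero-neighborhood $U' \in T$ with $u_0 + U' \subset U$, and then exploit that $\hat{F}(T) \subset {\rm cl}_{T^{\max}(\hat{X})}[I(U')]$ by the very definition of $\hat{F}(T)$ as an intersection over all zero-neighborhoods. The only difference is bookkeeping: the paper first shows $I(U) + s \subset {\rm cl}_{T^{\max}(\hat{X})}[I(U)]$ for each $s \in \hat{F}(T)$ and then upgrades to the closure by applying the closure operator together with ${\rm cl}[A] + s = {\rm cl}[A+s]$, whereas you approximate the closure point $z$ explicitly and split the target neighborhood as $V_1 + V_1 \subset V$; both executions are valid.
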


\begin{proof}
Since $\hat{F}(T)$ is a subspace of $\hat{X}$, it is enough to show that ${\rm cl}_{T^{\max}(\hat{X})}[I(U)] +s \subset {\rm cl}_{T^{\max}(\hat{X})}[I(U)]$ for all $s \in \hat{F}(T)$. Fix an element $u$ in $U$. By the continuity of the addition at $(u,0)$, there is a zero's open neighborhood $U'$ in $T$ such that $u + U' \subset U$. By  taking the closure of both sides of $I(u) + I(U') \subset I(U)$, we have $I(u) + {\rm cl}_{T^{\max}(\hat{X})}[I(U')] \subset {\rm cl}_{T^{\max}(\hat{X})}[I(U)]$ and therefore, $I(u) +s \in {\rm cl}_{T^{\max}(\hat{X})}[I(U)]$ holds. Since $u \in U$ is arbitrary, we have $I(U) +s \subset {\rm cl}_{T^{\max}(\hat{X})}[I(U)]$. By taking the closures of both side, the inclusion ${\rm cl}_{T^{\max}(\hat{X})}[I(U)] +s \subset{\rm cl}_{T^{\max}(\hat{X})}[I(U)]$ holds.
\end{proof}

\begin{dfn}
A neighborhood $N$ of zero in topological vector space over a valuation field $(K,\nu)$ is called {\it balanced neighborhood} if $N$ satisfies the following property:
\begin{align}
\text{$\nu(\kappa)\leq 1$ implies $\kappa \cdot  N \subset N$ for all $\kappa \in K$}.\nonumber
\end{align}

\end{dfn}

\begin{lem}[{\rm \cite[\S 1, No.5]{Bou}}]\label{balanced} 
A family of all balanced neighborhoods is a neighborhood base of zero in topological vector space $(X,T)$ over a non-trivial valuation field $(K,\nu)$.
\end{lem}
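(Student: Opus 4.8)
The plan is to reduce the statement to the following claim: every neighborhood of zero in $(X,T)$ contains a balanced neighborhood, which is exactly what it means for the family of balanced neighborhoods to be a neighborhood base at zero. So I would fix an arbitrary neighborhood $U$ of $0$ and construct a balanced neighborhood $N \subset U$ explicitly.

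First I would exploit the continuity of the scalar multiplication $K \times X \rightarrow X$ at the point $(0,0)$: since $0 \cdot 0 = 0 \in U$, there is a basic neighborhood of $(0,0)$ of product form mapping into $U$, that is, a neighborhood $V$ of $0$ in $X$ and a neighborhood of $0$ in $K$ whose product multiplies into $U$. Because the topology of $K$ is the metric topology of $d_{\nu}$, I may shrink the $K$-factor to an open ball $W \coloneqq \{\,\kappa \in K \mid \nu(\kappa) < r\,\}$ for some $r > 0$, so that $W \cdot V \subset U$ (using the notation of Definition \ref{Minkowski sum}). The candidate is then $N \coloneqq W \cdot V$.

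Checking that $N$ is balanced is a one-line computation from the multiplicativity of $\nu$ (property 3 of Definition \ref{valuation field}): if $\nu(\kappa) \leq 1$ and $w \in W$, then $\nu(\kappa w) = \nu(\kappa)\nu(w) \leq \nu(w) < r$, so $\kappa w \in W$, whence $\kappa \cdot N = (\kappa \cdot W)\cdot V \subset W \cdot V = N$. Moreover $N \subset U$ holds by construction, and $0 \in N$ since $0 \in W$ and $0 \in V$.

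The hard part — and the only place where the non-triviality of $\nu$ must be used — is showing that $N$ is genuinely a neighborhood of $0$, not merely a set containing it. Here I would invoke non-triviality to produce $\lambda \in K$ with $0 < \nu(\lambda) < 1$; then $\nu(\lambda^n) = \nu(\lambda)^n \rightarrow 0$, so for sufficiently large $n$ the element $w_0 \coloneqq \lambda^n$ is a \emph{nonzero} element of $W$. Since scalar multiplication by the fixed nonzero scalar $w_0$ is a self-homeomorphism of $(X,T)$, the set $w_0 \cdot V$ is a neighborhood of $0$, and it is contained in $N = W \cdot V$; hence $N$ contains a neighborhood of $0$ and is therefore itself a neighborhood of $0$. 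This yields a balanced neighborhood $N$ of zero with $N \subset U$, and since $U$ was arbitrary the proof is complete. I expect this final step to be the crux: in a trivially valued field the ball $W$ would collapse to $\{0\}$ and $N$ would degenerate to $\{0\}$, so the argument must visibly rely on the existence of arbitrarily small nonzero valuations guaranteed by the hypothesis that $(K,\nu)$ is non-trivial.
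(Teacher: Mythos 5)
Your proof is correct, but it builds a different set than the paper does. The paper forms the \emph{balanced core} of the given neighborhood $U$: it defines $N \coloneqq \bigcap_{\nu(\kappa) \geq 1} \kappa \cdot U$, so that $N \subset U$ (take $\kappa = 1$) and balancedness are essentially automatic, and all the work goes into showing that this intersection is still a neighborhood of zero — which it does with exactly your two ingredients: continuity of scalar multiplication at $(0,0)$ giving $B_{\nu}(0,\epsilon) \cdot V \subset U$, and non-triviality giving a nonzero $\alpha$ with $\nu(\alpha) < \epsilon$, whence $\alpha \cdot V \subset N$. You instead build the balanced set \emph{from below}, taking $N \coloneqq W \cdot V$ with $W$ an open ball in $K$: containment in $U$ holds by construction, balancedness is a one-line consequence of the multiplicativity of $\nu$, and the neighborhood property follows from the same non-triviality trick (a nonzero $w_0 \in W$ exists, and $w_0 \cdot V \subset N$ is a neighborhood since multiplication by $w_0$ is a self-homeomorphism). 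So the two routes rest on identical ingredients but differ in the construction: yours avoids the intersection over the uncountable family $\{\kappa : \nu(\kappa) \geq 1\}$ and makes every verification explicit (the paper leaves the balancedness of its $N$ to the reader), while the paper's construction has the mild structural advantage of producing the largest balanced subset of $U$, canonically attached to $U$. Both arguments are complete and correct, and your identification of where non-triviality is indispensable matches the paper's use of it exactly.
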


\begin{proof}
Fix an arbitrary open neighborhood $U$ of zero. We define the subset $N$ of $X$ by
\begin{align}
N \coloneqq \bigcap_{\nu(\kappa) \geq 1}\kappa \cdot U.\nonumber
\end{align}
It suffices to show that $N$ is a neighborhood of zero.  By the continuity of the scalar multiplication at $(0,0) \in K \times X$, there are zero's open neighborhood $V \in T$ and $\epsilon >0$ such that $B_{\nu}(0,\epsilon) \cdot V \subset U$, where $B_{\nu}(0,\epsilon)$ is an open ball with respect to $\nu$ whose center is $0$ and radius is $\epsilon$ in $K$. Since the valuation field $(K,\nu)$ is non-trivial, there is a non-zero element $\alpha \in K$ satisfying $\nu(\alpha) < \epsilon$. Then, it is easy to show that zero's open neighborhood $\alpha \cdot V$ is contained in $N$. Thus, $N$ is actually a zero's open neighborhood in $(X,T)$ contained in $U$. 
\end{proof}

\setcounter{thm}{0}
\begin{thm}[Restatement]
Let $(K,\nu)$ be a non-trivial valuation field and $X$ be a finite-dimensional vector space over $K$. 
Then, the maps $\hat{G}:\sigma_{\hat{K}}(\hat{X}) \rightarrow \tau_K(X)$ and $\hat{F}:\tau_K(X) \rightarrow \sigma_{\hat{K}}(\hat{X})$ defined in Definition \ref{FS} satisfy the following properties:
\begin{enumerate}
\item $\hat{F}$ and $\hat{G}$ invert the inclusion relation $\subset$ on $\tau_K(X)$ and $\sigma_{\hat{K}}(\hat{X})$. 
\item $\hat{F} \circ \hat{G} = {\rm id}_{\sigma_{\hat{K}}(\hat{X})}$.
\item For all $T \in \tau_{K}(X)$, we have $T \subset \hat{G} \circ \hat{F}(T)$.
\item If the completion $(\hat{K}, \hat{\nu})$ is a locally compact space, then $\hat{G} \circ \hat{F} = {\rm id}_{\tau_K(X)}$. 
Thus, the lattice $(\tau_K(X), \subset)$ is lattice isomorphic to $(\sigma_{\hat{K}}(\hat{X}), \supset)$ by $\hat{F}$.
\end{enumerate}
\end{thm}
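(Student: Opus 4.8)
The plan is to assemble the four parts, three of which are already within reach. Parts (2) and (3) are precisely Lemmas \ref{identity on sigma} and \ref{T is contained in GF}, so nothing new is required there. For part (1) I would simply do the monotonicity bookkeeping: enlarging $T$ enlarges the index family $\{\,U : 0\in U\in T\,\}$ of the intersection in Definition \ref{FS}, so $T_1\subset T_2$ forces $\hat F(T_2)\subset\hat F(T_1)$; and since $\hat G = I^*\circ G$, where the strip map $G$ reverses inclusions by Lemma \ref{key lemma}(2) (applicable because Proposition \ref{Bourbaki} makes $\tau_{\hat K}^H(\hat X)$ a singleton) while the pullback $I^*$ is monotone, $S_1\subset S_2$ gives $\hat G(S_2)\subset\hat G(S_1)$. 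Thus the whole weight of the theorem rests on the reverse inclusion $\hat G\circ\hat F(T)\subset T$ in part (4): once it holds, combining with part (3) gives $\hat G\circ\hat F = \mathrm{id}$, and with part (2) this makes $\hat F,\hat G$ mutually inverse bijections; being order-reversing by part (1), $\hat F$ is then an order isomorphism $(\tau_K(X),\subset)\to(\sigma_{\hat K}(\hat X),\supset)$, hence a lattice isomorphism by the note following the definition of lattice isomorphism.

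To prove $\hat G\circ\hat F(T)\subset T$ I would recast it as a continuity statement. Writing $\pi:\hat X\to\hat X/\hat F(T)$ for the quotient map and $\phi := \pi\circ I$, Lemma \ref{prop of F and G_0}(2) identifies the zero-neighborhoods of $G(\hat F(T))$ as the sets $W+\hat F(T)$ with $W$ a zero-neighborhood of $T_{\hat K}^{\max}(\hat X)$; since $\pi$ is a surjective linear quotient one checks $\pi_*(T_{\hat K}^{\max}(\hat X)) = T_{\hat K}^{\max}(\hat X/\hat F(T))$, so $\hat G\circ\hat F(T) = \phi^*\big(T_{\hat K}^{\max}(\hat X/\hat F(T))\big)$. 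Hence $\hat G\circ\hat F(T)\subset T$ is equivalent to continuity of $\phi:(X,T)\to(\hat X/\hat F(T),T^{\max})$ at $0$. As $\hat X/\hat F(T)$ is finite-dimensional over the complete non-trivial field $\hat K$, Proposition \ref{Bourbaki} lets me fix a norm $\|\cdot\|$ inducing $T^{\max}$; and because $\phi$ is $K$-linear with scalar multiplication in $(X,T)$ continuous, I can reduce continuity to a single boundedness statement: if $\phi(U_1)\subset B(0,R)$ for some zero-neighborhood $U_1\in T$, then given $\epsilon$ I pick $\alpha\in K^{\times}$ with $\nu(\alpha)R<\epsilon$ (possible by non-triviality) and obtain $\phi(\alpha U_1)=\alpha\,\phi(U_1)\subset B(0,\nu(\alpha)R)\subset B(0,\epsilon)$ with $\alpha U_1\in T$ open. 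So everything reduces to showing $\phi(U_1)$ is bounded for a well-chosen $U_1$.

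The main obstacle is exactly this boundedness, and it is where local compactness is indispensable. For the choice of $U_1$ I would take the neighborhood $U$ from Lemma \ref{good neighborhood} and, using Lemma \ref{balanced}, a balanced $U_1\subset U$ in $T$; then $C := \pi\big(\mathrm{cl}_{T^{\max}(\hat X)}[I(U_1)]\big)$ contains $\phi(U_1)$, is closed, and is $K$-balanced. By Lemma \ref{s invariant} the set $\mathrm{cl}_{T^{\max}(\hat X)}[I(U_1)]$ is $\hat F(T)$-saturated, so any nonzero subspace $L\subset C$ would pull back to a subspace $\pi^{-1}(L)\subset\mathrm{cl}_{T^{\max}(\hat X)}[I(U_1)]\subset\mathrm{cl}_{T^{\max}(\hat X)}[I(U)]$ strictly larger than $\hat F(T)$, contradicting Lemma \ref{good neighborhood}; hence $C$ contains no nonzero $\hat K$-subspace. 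Assuming $C$ unbounded, choose $c_n\in C$ with $\|c_n\|\to\infty$ and, fixing $\kappa_0\in K^{\times}$ with $\gamma_0:=\nu(\kappa_0)<1$, rescale by $\lambda_n:=\kappa_0^{k_n}$ so that $\|\lambda_n c_n\|\in[\gamma_0,1]$; since $\nu(\lambda_n)\le 1$ and $C$ is $K$-balanced, $\lambda_n c_n\in C$. Because $\hat K$ is locally compact, $\hat X/\hat F(T)\cong\hat K^{\dim}$ is locally compact and the closed annulus $\{\,\gamma_0\le\|x\|\le 1\,\}$ is compact, so a subsequence of $\lambda_n c_n$ converges to some $c_\infty\in C$ with $c_\infty\ne 0$. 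For each $t\in K$ one has $\nu(t\lambda_n)\le 1$ eventually, so $t\lambda_n c_n\in C$, and closedness gives $t\,c_\infty\in C$; thus $K\,c_\infty\subset C$, and taking closures — using that $\lambda\mapsto\lambda c_\infty$ is a homeomorphism of $\hat K$ onto the line $\hat K c_\infty$ by Proposition \ref{Bourbaki}, together with the density of $K$ in $\hat K$ — yields $\hat K\,c_\infty\subset C$, a nonzero subspace, the desired contradiction. Therefore $C$, hence $\phi(U_1)$, is bounded, $\phi$ is continuous, $\hat G\circ\hat F(T)\subset T$, and part (4) together with the lattice isomorphism follows as in the first paragraph.
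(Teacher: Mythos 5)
Your proposal is correct, and while it follows the paper's overall architecture---parts (1)--(3) are disposed of exactly as in the paper, and part (4) rests on the same ingredients (Lemma \ref{good neighborhood}, balanced neighborhoods from Lemma \ref{balanced}, the saturation Lemma \ref{s invariant}, a norm on $\hat{X}/\hat{F}(T)$, and compactness of a norm-annulus supplied by local compactness of $\hat{K}$)---the decisive boundedness step is carried out by a genuinely different argument. The paper first proves a pointwise claim: for each nonzero $\hat{y}$ in the quotient there is a bound $M_{\hat{y}}$ beyond which scalars push $\hat{y}$ out of ${\rm cl}_{T^{\max}}[P\circ I(U')]$ (this is where Lemma \ref{good neighborhood} enters), and then converts these pointwise bounds into a uniform bound $\nu(\kappa^{M+1})$ by taking a finite subcover of the compact annulus by rescaled balls; boundedness comes with an explicit constant. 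You instead argue by contradiction with a compactness-limit: an unbounded sequence in $C=\pi({\rm cl}_{T^{\max}}[I(U_1)])$ is rescaled into the annulus, a subsequential limit $c_\infty\neq 0$ lies in $C$ by closedness, and $K$-balancedness, closedness, and density of $K$ in $\hat{K}$ force the whole line $\hat{K}c_\infty$ into $C$, contradicting the no-nonzero-subspace property of $C$ that you extract from Lemmas \ref{good neighborhood} and \ref{s invariant}. Your route is shorter and conceptually cleaner (compactness produces a limiting direction, and the good-neighborhood lemma is invoked once, at the level of subspaces), while the paper's covering argument is more laborious but yields an explicit uniform bound without extracting limits. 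Your preliminary identity $\hat{G}\circ\hat{F}(T)=\phi^{*}(T^{\max}(\hat{X}/\hat{F}(T)))$ with $\phi=\pi\circ I$ (valid since $\pi_{*}(T^{\max}(\hat{X}))=T^{\max}(\hat{X}/\hat{F}(T))$, both inclusions following from maximality and the compatibility of induced/coinduced topologies) is also a tidy reformulation the paper does not state; it reduces part (4) to continuity of a single linear map at zero. Two small repairs are needed: Lemma \ref{balanced} produces balanced \emph{neighborhoods}, not open sets, so $\alpha U_1$ should be called a $T$-neighborhood of zero (or replace $U_1$ by an open subset before scaling), which is all that continuity at zero requires; and the closedness of $C$, which you assert, deserves the one-line justification the paper gives inside its claim---${\rm cl}_{T^{\max}}[I(U_1)]$ is $\hat{F}(T)$-saturated by Lemma \ref{s invariant} and the quotient map is open, hence the image of this closed saturated set is closed.
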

\begin{proof}
For 1, the map $G$ inverts the inclusion from Lemma \ref{prop of F and G_0} and thus, the definition of $\hat{G}$ implies that $\hat{G}$ also inverts the inclusion. The map $\hat{F}$ inverts the inclusion by its definition. We have already proved 2 and 3 in Lemma \ref{identity on sigma} and Lemma \ref{T is contained in GF}. We prove 4 in particular, $\hat{G} \circ \hat{F}(T) \subset T$. Fix an element $T$ of $\tau_K(X)$.
By Lemma \ref{good neighborhood}, there is a zero's open neighborhood $U \in T$ such that if a subspace $S$ is contained in ${\rm cl}_{T^{\max}(\hat{X})}[U]$, then $S$ is also contained in $\hat{F}(T)$. By Lemma \ref{balanced}, we have a balanced neighborhood $N$ contained in $U$, and we take a zero's {\bf open} neighborhood contained in $N$ denoted by $U'$ with respect to $T$. Let $\hat{Y}$ be the quotient space $\hat{X}/\hat{F}(T)$ and $P:\hat{X} \rightarrow \hat{Y}$ be the quotient map. \par
We claim that, for every non-zero $\hat{y} \in \hat{Y}$, there exists a positive number $M_{\hat{y}}$ depending on $\hat{y}$, such that for any $\alpha \in \hat{K}$ satisfying $\hat{\nu}(\alpha) \geq M_{\hat{y}}$, we have $\alpha \cdot \hat{y} \not\in {\rm cl}_{T^{\max}(\hat{Y})}[P \circ I(U')]$.
We give a proof by contradiction.  
Suppose that there is a point $\hat{y} \in\hat{Y}$ such that $\alpha \cdot \hat{y}$ belongs to ${\rm cl}_{T^{\max}(\hat{Y})}[P \circ I(U')]$ for arbitrary large $\alpha$ with respect to the valuation $\hat{\nu}$. The point $\hat{y}\not=0$ is represented as $\hat{y}=P(\hat{x}),~\hat{x} \in \hat{X} \setminus \hat{F}(T)$. By Lemma \ref{s invariant}, ${\rm cl}_{T^{\max}(\hat{Y})}[P \circ I(U')]$ is equal to $P({\rm cl}_{T^{\max}(\hat{X})}[I(U')])$. More precisely, by the continuity of $P : (\hat{X},T_{\hat{K}}^{\max}(\hat{X})) \rightarrow (\hat{Y},T_{\hat{K}}^{\max}(\hat{Y}))$, we have the inclusion $\supset$. Because an open set $P(\hat{X} \setminus {\rm cl}_{T^{\max}(\hat{X})}[I(U')])$ does not intersect $P({\rm cl}_{T^{\max}(\hat{X})}[I(U')])$ from Lemma \ref{s invariant}, $P({\rm cl}_{T^{\max}(\hat{X})}[I(U')])$ is a closed subset containing $P\circ I(U')$. Thus, we have ${\rm cl}_{T^{\max}(\hat{Y})}[P \circ I(U')] = P({\rm cl}_{T^{\max}(\hat{X})}[I(U')])$. This implies that $\alpha \cdot \hat{y} \in {\rm cl}_{T^{\max}(\hat{Y})}[P \circ I(U')]$ is equivalent to $\alpha \cdot \hat{x} \in {\rm cl}_{T^{\max}(\hat{X})}[I(U')]$. By the assumption on $\hat{y}$, we have $\alpha \cdot \hat{x} \in {\rm cl}_{T^{\max}(\hat{X})}[I(U')]$ for arbitrary large $\alpha$, and hence a subspace generated by $\hat{x}$ is contained in ${\rm cl}_{T^{\max}(\hat{X})}[I(U)]$ because $N$ is a balanced neighborhood. Because of the way we take $U$, the point $\hat{x}$ is in $\hat{F}(T)$ and this contradicts against $\hat{y}$ being non-zero.\par
Now, we again take a balanced neighborhood $N'$ contained in $U'$. By extending a basis of $\hat{F}(T)$, we obtain a subspace $S'$ to decompose $\hat{X}$ into
$
\hat{X} = \hat{F}(T) \oplus S'.
$ 
Let $\{\,b_1,b_2, \dots ,b_n\,\}$ be a basis of $S'$. Then, $\{\,P(b_1),P(b_2), \dots ,P(b_n)\,\}$ is a basis of $\hat{Y}$ and we introduce a norm $\|\cdot\|_{\hat{Y}}$ in $\hat{Y}$ as the same way in Lemma \ref{good norms} with respect to this basis. 
Since $\nu$ is non-trivial, we fix an element $\kappa$ with $\nu(\kappa) > 1$.
We show $P({\rm cl}_{T^{\max}(\hat{X})}[I(N')])$ is bounded, that is, there exists a positive $M>0$ such that $B_{\hat{Y}}(0,\nu(\kappa^{M+1}))$ contains $P({\rm cl}_{T^{\max}(\hat{X})}[I(N')])$, where $B_{\hat{Y}}(q,r)$ is an open ball centered at $q$ whose radius is $r$ with respect to the norm $\|\cdot\|_{\hat{Y}}$. We define a subset $A$ of $\hat{Y}$ as follows:
$$
A \coloneqq \{\,\hat{y} \in \hat{Y} \mid 1 \leq \|\hat{y}\|_{\hat{Y}} \leq \nu(\kappa)\,\}.
$$   
{\bf We use the assumption of $\hat{K}$ being locally compact to deduce that $A$ is a compact subset in $(\hat{Y},T_{\hat{K}}^{\max}(\hat{Y}))$.}
For every element $\hat{y}$ in $A$, we can take a natural number $n_{\hat{y}}$ so that $\kappa^{n_{\hat{y}}} \geq M_{\hat{y}}$ holds and positive number $\epsilon_{\hat{y}}$ so that $B_{\hat{Y}}(\kappa^{n_{\hat{y}}} \cdot \hat{y}, \epsilon_{\hat{y}}) \cap {\rm cl}_{T^{\max}(\hat{Y})}[P \circ I(N')] = \emptyset$ holds. Since $A$ is compact, we can take a finite subcover of $\{\,\kappa^{-n_{\hat{y}}}\cdot B_{\hat{Y}}(\kappa^{n_{\hat{y}}}\cdot \hat{y}, \epsilon_{\hat{y}}) \mid \hat{y} \in A\,\}$, denoted by $\{\,\kappa^{-n_{\hat{y}_i}}\cdot B_{\hat{Y}}(\kappa^{n_{\hat{y}_i}}\cdot \hat{y}_i, \epsilon_{\hat{y}_i}) \mid \hat{y}_i \in A, i=1,2,\dots, m\,\}$. Let $M$ be the maximum of $\{\,n_{\hat{y}_i}\mid i = 1,2,\dots, m\,\}$. Take a non-zero element $\hat{y}$ from $P({\rm cl}_{T^{\max}(\hat{X})}[I(N')])$. 
Then, there exists an integer $z$ satisfying $\nu(\kappa^z) \leq \|\hat{y}\|_{\hat{Y}} < \nu(\kappa^{z+1})$. Because $\kappa^{-z} \cdot \hat{y}$ is in $A$ by the definition of $A$, we can take $i$ so that $\kappa^{n_{\hat{y}_i}-z} \cdot \hat{y} \in B_{\hat{Y}}(\kappa^{n_{\hat{y}_i}} \cdot \hat{y}_i, \epsilon_{\hat{y}_i})$. Then, $\kappa^{n_{\hat{y}_i}-z} \cdot \hat{y}$ does not belong to $P({\rm cl}_{T^{\max}(\hat{X})}[I(N')])$ since $B_{\hat{Y}}(\kappa^{n_{\hat{y}_i}} \cdot \hat{y}_i, \epsilon_{\hat{y}_i})$ does not intersect ${\rm cl}_{T^{\max}(\hat{Y})}[P(I(N'))]$. Thus, we have $\nu(\kappa^{n_{\hat{y}_i}-z}) > 1$ since $N'$ is a balanced neighborhood. Therefore, we deduce that $z \leq n_{\hat{y}_i} \leq M$ and $\|\hat{y}\|_{\hat{Y}} \leq \nu(\kappa^{M+1})$.\par
Now, fix an arbitrary zero's open neighborhood $V$ in $\hat{G} \circ \hat{F}(T)$. Then, $V$ is represented as $V= I^{-1}(W +\hat{F}(T))$, where $W$ is a zero's open neighborhood with respect to $T_{\hat{K}}^{\max}(\hat{X})$. Because $P(W)$ is a  zero's open neighborhood in $\hat{Y}$, the inclusion $B_{\hat{Y}}(0,\delta) \subset P(W)$ holds for some positive $\delta$. We can take $\lambda \in K\setminus \{0\}$ so that $\nu(\lambda) <\frac{\delta}{\nu(\kappa^{M+1})}$ holds. We show that a neighborhood of zero $\lambda \cdot N'$ in $(X,T)$ is contained in $V$.
Let $x$ be an element of $N'$. Since the norm of $P\circ I(x)$ is bounded by $\nu(\kappa^{M+1})$, we have $\|P\circ I(\lambda \cdot x)\|_{\hat{Y}} \leq \nu(\lambda)\nu(\kappa^{M+1}) <\delta$. Thus, $P \circ I(\lambda \cdot x)$ is in $P(W)$, and $I(\lambda \cdot x) \in W + \hat{F}(T)$. By taking the inverse image of $I$, we deduce $\lambda \cdot x$ is in $V$. Thus, we have $\lambda \cdot N' \subset V \in \hat{G} \circ \hat{F}(T)$ and $\hat{G} \circ \hat{F}(T) \subset T$. 
Therefore, $\hat{F}$ and $\hat{G}$ are bijections and preserve the inclusion orders of $(\tau_K(X), \subset)$ and $(\sigma_{\hat{K}}(\hat{X}), \supset)$. Since a bijection which preserve orders of between lattices is a lattice isomorphism, 
we conclude that $(\tau_K(X), \subset)$ is a lattice, which is isomorphic to $(\sigma_{\hat{K}}(\hat{X}), \supset)$ when $\hat{K}$ is a locally compact space.
\end{proof}

\section{Application}
By using the main theorem, we can describe topological properties in terms of subspaces.

\begin{prop}\label{continuous}
Let $X$ and $Y$ be a finite-dimensional vector space over a non-trivial valuation field $(K,\nu)$ whose completion is locally compact. For a linear map $L:X\rightarrow Y$ and for compatible topologies $T_X \in \tau_K(X)$ and $T_Y \in \tau_K(Y)$, the map $L:(X,T_X) \rightarrow (Y,T_Y)$ is continuous if and only if the image of $\hat{F}(T_X)$ by $\hat{L}$ is contained in $\hat{F}(T_Y)$, where $\hat{L}:\hat{X} \rightarrow \hat{Y}$ is a $\hat{K}$-linear map such that the following diagram commutes:
\[
  \begin{CD}
     \hat{K}\times X @>{{\rm id}_{\hat{K}} \times L}>> \hat{K} \times Y \\
  @V{\otimes}VV    @V{\otimes}VV \\
     \hat{X}   @>{\hat{L}}>>  \hat{Y}.
  \end{CD}
\]
\end{prop}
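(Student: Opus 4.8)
The plan is to reduce everything to two soft facts: that the induced map $\hat{L}:(\hat{X},T_{\hat{K}}^{\max}(\hat{X}))\to(\hat{Y},T_{\hat{K}}^{\max}(\hat{Y}))$ is automatically continuous, and that the diagram gives the intertwining relation $\hat{L}\circ I_X=I_Y\circ L$, where $I_X:X\to\hat{X}$ and $I_Y:Y\to\hat{Y}$ are the canonical inclusions. The relation is read off by chasing $1\otimes x$: since $\hat{L}(\alpha\otimes x)=\alpha\otimes L(x)$, we get $\hat{L}(I_X(x))=\hat{L}(1\otimes x)=1\otimes L(x)=I_Y(L(x))$. The continuity of $\hat{L}$ for the maximum topologies follows from Proposition \ref{Bourbaki}: both $\hat{X}$ and $\hat{Y}$ are finite-dimensional over the complete field $\hat{K}$, so their maximum topologies are the norm topologies and every $\hat{K}$-linear map between them is continuous. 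I would first record these two facts and then prove the two implications separately.

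For the ``only if'' direction I assume $L$ is continuous, fix $s\in\hat{F}(T_X)$ and an arbitrary zero-neighborhood $V\in T_Y$. Continuity of $L$ at $0$ yields a zero-neighborhood $U\in T_X$ with $L(U)\subset V$, hence $\hat{L}(I_X(U))=I_Y(L(U))\subset I_Y(V)$. Since $s\in\hat{F}(T_X)\subset{\rm cl}_{T^{\max}(\hat{X})}[I_X(U)]$ and $\hat{L}$ is continuous for the maximum topologies, I obtain
\[
\hat{L}(s)\in{\rm cl}_{T^{\max}(\hat{Y})}[\hat{L}(I_X(U))]\subset{\rm cl}_{T^{\max}(\hat{Y})}[I_Y(V)].
\]
Intersecting over all such $V$ gives $\hat{L}(s)\in\hat{F}(T_Y)$, whence $\hat{L}(\hat{F}(T_X))\subset\hat{F}(T_Y)$. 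This direction uses neither the main theorem nor local compactness.

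For the ``if'' direction I assume $\hat{L}(\hat{F}(T_X))\subset\hat{F}(T_Y)$ and set $S_X:=\hat{F}(T_X)$, $S_Y:=\hat{F}(T_Y)$. The crux is the intermediate claim that $\hat{L}:(\hat{X},G(S_X))\to(\hat{Y},G(S_Y))$ is continuous. Since $\hat{L}(S_X)\subset S_Y$, the map descends to a $\hat{K}$-linear $\bar{L}:\hat{X}/S_X\to\hat{Y}/S_Y$ with $\pi_{S_Y}\circ\hat{L}=\bar{L}\circ\pi_{S_X}$. Writing $Q_X:={\pi_{S_X}}_*(T^{\max})$ and $Q_Y:={\pi_{S_Y}}_*(T^{\max})$ so that $G(S_X)=\pi_{S_X}^*(Q_X)$ and $G(S_Y)=\pi_{S_Y}^*(Q_Y)$, I would establish the claim in two steps: first, $\bar{L}:(\hat{X}/S_X,Q_X)\to(\hat{Y}/S_Y,Q_Y)$ is continuous because, by the universal property of the coinduced topology $Q_X$, it suffices that $\bar{L}\circ\pi_{S_X}=\pi_{S_Y}\circ\hat{L}$ be continuous for $T^{\max}$, which holds as the composite of the max-continuous $\hat{L}$ with the quotient map $\pi_{S_Y}$; second, $\hat{L}$ is then continuous for $G(S_X),G(S_Y)$ by the universal property of the induced topology $G(S_Y)=\pi_{S_Y}^*(Q_Y)$, since $\pi_{S_Y}\circ\hat{L}=\bar{L}\circ\pi_{S_X}$ is continuous out of $(\hat{X},G(S_X))$. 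With the claim in hand, I take $V\in T_Y$; by Theorem \ref{main thm}(3), $T_Y\subset\hat{G}\circ\hat{F}(T_Y)=I_Y^*(G(S_Y))$, so $V=I_Y^{-1}(V')$ with $V'\in G(S_Y)$, and then
\[
L^{-1}(V)=(I_Y\circ L)^{-1}(V')=(\hat{L}\circ I_X)^{-1}(V')=I_X^{-1}\bigl(\hat{L}^{-1}(V')\bigr)\in I_X^*(G(S_X))=\hat{G}\circ\hat{F}(T_X).
\]
Here local compactness enters through Theorem \ref{main thm}(4), which gives $\hat{G}\circ\hat{F}(T_X)=T_X$; hence $L^{-1}(V)\in T_X$ and $L$ is continuous.

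I expect the main obstacle to be the intermediate continuity claim for the strip topologies in the ``if'' direction, namely keeping straight which of $G(S_X),G(S_Y)$ is induced and which quotient is coinduced, and invoking the correct universal property for each, rather than any delicate estimate. The ``only if'' direction is essentially formal, and the sole genuinely global input is the equality $\hat{G}\circ\hat{F}={\rm id}_{\tau_K(X)}$ of Theorem \ref{main thm}(4), which is precisely where the local compactness hypothesis on $\hat{K}$ is used.
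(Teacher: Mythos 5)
Your proof is correct, and its overall architecture coincides with the paper's: your ``only if'' direction is the same formal computation (the intertwining relation $\hat{L}\circ I_X=I_Y\circ L$, continuity of $\hat{L}$ for the maximum topologies, and the fact that a continuous map sends the closure of a set into the closure of its image), and your ``if'' direction follows the same skeleton of first proving $\hat{L}:(\hat{X},G(\hat{F}(T_X)))\to(\hat{Y},G(\hat{F}(T_Y)))$ continuous and then descending through $I_X,I_Y$ via Theorem \ref{main thm}. Where you genuinely diverge is in the proof of that intermediate continuity claim. The paper extends a basis to get a direct-sum decomposition $\hat{F}(T_Y)=\hat{L}(\hat{F}(T_X))\oplus S'$ and computes preimages explicitly using the description of strip-topology open sets from Lemma \ref{prop of F and G_0}, namely $\hat{L}^{-1}(V+\hat{F}(T_Y))=\hat{L}^{-1}(V+S')+\hat{F}(T_X)$. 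You instead factor $\hat{L}$ through the quotients as $\bar{L}:\hat{X}/\hat{F}(T_X)\to\hat{Y}/\hat{F}(T_Y)$ (possible precisely because $\hat{L}(\hat{F}(T_X))\subset\hat{F}(T_Y)$) and invoke the universal property of the coinduced topology on the source quotient and that of the induced topology on the target; this avoids both the basis extension and the set arithmetic, at the cost of keeping the two universal properties straight, which you do correctly. Two further points in your favour: you actually justify the continuity of $\hat{L}$ for the maximum topologies (via Proposition \ref{Bourbaki}; alternatively one can simply note that $\hat{L}^*(T^{\max}_{\hat{K}}(\hat{Y}))$ is a compatible topology on $\hat{X}$, hence contained in $T^{\max}_{\hat{K}}(\hat{X})$), a fact the paper uses without comment; and you observe that on the codomain side only the inclusion $T_Y\subset\hat{G}\circ\hat{F}(T_Y)$ of part 3 of Theorem \ref{main thm} is needed, so that local compactness enters solely through the equality $\hat{G}\circ\hat{F}(T_X)=T_X$ on the domain side, whereas the paper invokes the full equality of part 4 on both sides.
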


\begin{proof}
Let $I_X:X \rightarrow \hat{X}$ and $I_Y:Y \rightarrow \hat{Y}$ be the maps by which $x\in X$ and $y \in Y$ are sent to $1 \otimes x$ and $1\otimes y$, respectively. Since the above diagram commutes, we have $\hat{L} \circ I_X = I_Y \circ L$ holds.\par
First, we assume $L:(X,T_X) \rightarrow (Y,T_Y)$ is a continuous map. By the continuity of $L$, we have 
$$
\hat{F}(T_X)=\bigcap_{0 \in U \in T_X}{\rm cl}_{T^{\max}(\hat{X})}[I_X(U)] \subset \bigcap_{0 \in V \in T_Y}{\rm cl}_{T^{\max}(\hat{X})}[I_X(L^{-1}(V))].
$$ 
For every zero's open neighborhood $V \in T_Y$, we have $I_X(L^{-1}(V)) \subset \hat{L}^{-1}(I_Y(V))$ because the above diagram commutes. By the continuity of $\hat{L}:(\hat{X},T_{\hat{K}}^{\max}(\hat{X})) \rightarrow (\hat{Y},T_{\hat{K}}^{\max}(\hat{Y}))$, we also have 
$$
{\rm cl}_{T^{\max}(\hat{X})}[\hat{L}^{-1}(I_Y(V))] \subset \hat{L}^{-1}({\rm cl}_{T^{\max}(\hat{Y})}[I_Y(V)]).
$$
 Thus, ${\rm cl}_{T^{\max}(\hat{X})}[I_X(L^{-1}(V))]$ is contained in $\hat{L}^{-1}({\rm cl}_{T^{\max}(\hat{Y})}[I_Y(V)])$ and this implies $\hat{F}(T_X) \subset \hat{L}^{-1}(\hat{F}(T_Y))$. Therefore, the continuity of $L$ implies $\hat{L}(\hat{F}(T_X)) \subset \hat{F}(T_Y)$.\par
Next, we assume that $\hat{L}(\hat{F}(T_X))$ is contained in $\hat{F}(T_Y)$. We denote the strip maps of $\hat{X}$ and $\hat{Y}$ by $G_X$ and $G_Y$, respectively. By extending a basis of $\hat{L}(\hat{F}(T_X))$, we have a direct decomposition of $\hat{F}(T_Y)$ into $\hat{L}(\hat{F}(T_X)) \oplus S'$, where $S'$ is a linear subspace. By 2 of Lemma \ref{prop of F and G_0}, every open subset in $G_Y(\hat{F}(T_Y))$ is represented as $V + \hat{F}(T_Y),~V \in T_{\hat{K}}^{\max}(\hat{Y})$. Thus, its inverse image by $\hat{L}$ is represented as $\hat{L}^{-1}(V+S') + \hat{F}(T_X)$ and is in $G_X\circ\hat{F}(T_X)$. Therefore, $\hat{L}:(\hat{X},G_X \circ \hat{F}(T_X)) \rightarrow (\hat{Y},G_Y \circ \hat{F}(T_Y))$ is continuous. 
 Now, for an open subset $U$ in $T_Y$, there is an open subset $\hat{U} \in G_Y \circ \hat{F}(T_Y)$ such that $U = I_Y^{-1}(\hat{U})$ since $T_Y= \hat{G} \circ \hat{F}(T_Y)$ holds from Theorem \ref{main thm}. Since the above diagram  commutes, we have $L^{-1}(U) = (I_Y \circ L)^{-1}(\hat{U}) = I_X^{-1}(\hat{L}^{-1}(\hat{U}))$. By the continuity of $\hat{L}: (\hat{X},G_X \circ \hat{F}(T_X)) \rightarrow (\hat{Y},G_Y \circ \hat{F}(T_Y))$, we have $\hat{L}^{-1}(\hat{U})$ is in $G_X \circ \hat{F}(T_X)$ and thus, $L^{-1}(U) \in \hat{G} \circ \hat{F}(T_X)$ holds. Again, by Theorem \ref{main thm}, the topology $\hat{G} \circ \hat{F} (T_X)$ coincides with $T_X$ and therefore, $L:(X,T_X) \rightarrow (Y,T_Y)$ is continuous.  
\end{proof}

The following proposition states the equivalent condition of a given compatible topology being Hausdorff when coefficient field is possibly not complete, which is an analogy of the statement: For a compatible topology $T$ on $X$ over non-trivial complete valuation field, $T$ is Hausdorff if and only if $F(T)=\{0\}$.  
\begin{prop}\label{equivalent condition of Hausdorff}
Let $T$ be a compatible topology on $X$. Then, $T$ is a Hausdorff topology if and only if $\hat{F}(T) \cap I(X) =\{0\}$.
\end{prop}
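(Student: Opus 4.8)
The plan is to reduce the topological condition to the purely algebraic identity $\hat{F}(T)\cap I(X)=I(F(T))$, after which everything follows from the elementary description of the Hausdorff property. First I would recall, from the remark opening the proof of Proposition \ref{Hausdorff} together with Proposition \ref{g is subspace}, that $(X,T)$ is Hausdorff exactly when $F(T)={\rm cl}_T[\{0\}]=\bigcap_{0\in U\in T}U=\{0\}$. Since $I$ is injective and $I(0)=0$, the proposition is equivalent to the claim $\hat{F}(T)\cap I(X)=I(F(T))$. One inclusion is free: because $I(U)\subset {\rm cl}_{T^{\max}(\hat{X})}[I(U)]$ for every neighborhood $U$, injectivity of $I$ gives $I(F(T))=\bigcap_{0\in U\in T}I(U)\subset \bigcap_{0\in U\in T}{\rm cl}_{T^{\max}(\hat{X})}[I(U)]=\hat{F}(T)$, and trivially $I(F(T))\subset I(X)$. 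This already settles the implication ``$\hat{F}(T)\cap I(X)=\{0\}\Rightarrow T$ Hausdorff''.

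For the reverse inclusion I would first transport the computation into $X$. Using that intersection distributes over the fixed set $I(X)$, the injectivity of $I$, and the first equality of Lemma \ref{regular open} applied to each open $U$, one obtains
$$
\hat{F}(T)\cap I(X)=\bigcap_{0\in U\in T}\big({\rm cl}_{T^{\max}(\hat{X})}[I(U)]\cap I(X)\big)=\bigcap_{0\in U\in T}I\big({\rm cl}_{T^{\max}(X)}[U]\big)=I\Big(\bigcap_{0\in U\in T}{\rm cl}_{T^{\max}(X)}[U]\Big).
$$
Thus the remaining claim is the inclusion $\bigcap_{0\in U\in T}{\rm cl}_{T^{\max}(X)}[U]\subset F(T)$ inside $X$. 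I expect this to be the heart of the matter: the obstacle is that the sets ${\rm cl}_{T^{\max}(X)}[U]$ could a priori be strictly larger than $U$, so one must show that passing to the $T^{\max}(X)$-closure introduces no new points of the intersection $\bigcap_{0\in U\in T}U$.

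The key observation resolving this is that $T\subset T^{\max}(X)$ forces ${\rm cl}_{T^{\max}(X)}[U]\subset {\rm cl}_T[U]$, so it suffices to bound $T$-closures of small neighborhoods. Given any open $U_0\in T$ with $0\in U_0$, I would choose (by continuity of the addition) an open $W\in T$ with $W+W\subset U_0$, then a balanced neighborhood $N\subset W$ via Lemma \ref{balanced}, and set $V\coloneqq{\rm int}_T[N]$. Since $\nu(-1)=1$ we have $-N\subset N$, so $N$ is symmetric and hence so is the open set $V$; moreover $V+V\subset W+W\subset U_0$. A one-line translation argument then yields ${\rm cl}_T[V]\subset V-V=V+V\subset U_0$: if $x\in {\rm cl}_T[V]$, the open neighborhood $x+V$ of $x$ must meet $V$, which forces $x\in V-V$. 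Consequently any $x$ in $\bigcap_{0\in U\in T}{\rm cl}_{T^{\max}(X)}[U]$ lies in ${\rm cl}_{T^{\max}(X)}[V]\subset {\rm cl}_T[V]\subset U_0$, and as $U_0$ was arbitrary we get $x\in\bigcap_{0\in U\in T}U=F(T)$. This establishes the reverse inclusion, hence the identity $\hat{F}(T)\cap I(X)=I(F(T))$, and thus the proposition. I note that this argument avoids the local compactness hypothesis; alternatively, under that hypothesis one could derive the same identity quickly from Theorem \ref{main thm}, writing $T=I^*(G(\hat{F}(T)))$ and computing ${\rm cl}_T[\{0\}]=I^{-1}(\hat{F}(T))$ from $F\circ G={\rm id}$.
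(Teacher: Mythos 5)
Your proof is correct: every step checks out, and each citation (Proposition \ref{g is subspace}, Lemma \ref{regular open}, Lemma \ref{balanced}) is used within its hypotheses. It is, however, organized quite differently from the paper's proof. The paper argues the two implications pointwise: for the forward direction it asserts that Hausdorffness yields, for each $x\neq 0$, a zero-neighborhood $U\in T$ with $x\notin{\rm cl}_{T^{\max}(X)}[U]$, and then applies Lemma \ref{regular open} to conclude $I(x)\notin\hat{F}(T)$; for the backward direction it takes $U$ with $I(x)\notin{\rm cl}_{T^{\max}(\hat X)}[I(U)]$, deduces $x\notin U$, and exhibits the neighborhood $x+V$ of $x$ (with $-V\subset U$) missing zero, so $\{0\}$ is closed. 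You instead prove the stronger set identity $\hat{F}(T)\cap I(X)=I({\rm cl}_T[\{0\}])$, valid for \emph{every} compatible $T$ (note it is stronger than, not equivalent to, the proposition), and read the proposition off from it. Both proofs pivot on the first equality of Lemma \ref{regular open}; the real difference is that your chain ${\rm cl}_{T^{\max}(X)}[V]\subset{\rm cl}_T[V]\subset V+V\subset U_0$ proves explicitly the regularity-type fact that the paper's forward direction invokes without proof, and in exchange you obtain a reusable description of $\hat{F}(T)\cap I(X)$ for arbitrary $T$, Hausdorff or not, while the paper's argument is shorter. Two minor remarks: the appeal to Lemma \ref{balanced} is overkill, since the symmetric open neighborhood $W\cap(-W)$ serves the same purpose without invoking non-triviality of $\nu$; and your closing observation that the argument avoids local compactness, while true, is not a point of contrast with the paper, whose proof of this proposition also makes no use of that hypothesis (only the alternative derivation via Theorem \ref{main thm} that you sketch would need it).
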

\begin{proof}
First, we assume that $T$ is a Hausdorff topology. We show that every non-zero element $I(x),\,x\not=0$ of $I(X)$ does not belong to $\hat{F}(T)$. Since $T$ is a Hausdorff, there exists an open neighborhood $U \in T \subset T^{\max}(X)$ of zero such that $x \not\in{\rm cl}_{T^{\max}(X)}[U]$. Then, $I(x)$ does not belong to ${\rm cl}_{T^{\max}(\hat{X})}[I(U)]$ because $I$ is injective and because we have ${\rm cl}_{T^{\max}(\hat{X})}[I(U)] \cap I(X) = I({\rm cl}_{T^{\max}}[U])$ from Lemma \ref{regular open}. Therefore, $I(x)$ does not belong to $\hat{F}(T)$ by the definition of $\hat{F}(T)$, which implies $\hat{F}(T) \cap I(X)=\{0\}$.\par
Next, we assume that $\hat{F}(T) \cap I(X)=\{0\}$. To prove that $T$ is a Hausdorff topology, it is enough to show that $\{0\} \subset X$ is a closed subset with respect to $T$. Let $x$ be an arbitrary non-zero element of $X$. Our assumption implies that $I(x)$ does not belong to $\hat{F}(T)$. By the definition of $\hat{F}(T)$, there exists an open neighborhood $U \in T$ of zero such that $I(x) \not\in {\rm cl}_{T^{\max}(\hat{X})}[I(U)]$. Then, $x$ does not belong to $U$ from Lemma \ref{regular open} and from injectivity of $I$. The continuity of a map $X \ni x \mapsto -x \in X$ implies that we have neighborhood $V \in T$ of zero such that $-V \subset U$. The open set $x+V$ is an open neighborhood of $x$, to which zero does not belong. Therefore, the set $\{0\}$ is a closed set, which implies $T$ is a Hausdorff topology.
\end{proof}

\begin{exa}
In this example, we see that there are uncountable many Hausdorff compatible topologies on $\mathbb{Q}^n$ for $n\geq 2$.\par
Let $K\coloneqq \mathbb{Q}$ be the field of rational numbers with the ordinary absolute value, whose completion is the field of real numbers $\mathbb{R}$ and $X\coloneqq\mathbb{Q}^n$ be an $n$-dimensional $\mathbb{Q}$-vector space for $n \geq 2$. We identify $\hat{\mathbb{Q}}$ with $\mathbb{R}$ ($\supset \mathbb{Q}$) and $\hat{X}\coloneqq \hat{\mathbb{Q}} \bigotimes_{\mathbb{Q}} \mathbb{Q}^n$ with $\mathbb{R}^n$. Under this identification, we have a correspondence $\hat{F}$ between $\tau_{\mathbb{Q}}(\mathbb{Q}^n)$ and $\sigma_{\mathbb{R}}(\mathbb{R}^n)$ by Theorem \ref{main thm}. Now, we define an action of the projective linear group $PGL_n(\mathbb{R})$ on $\sigma_{\mathbb{R}}(\mathbb{R}^n)$ by
$$
PGL_n(\mathbb{R})\times \sigma_{\mathbb{R}}(\mathbb{R}^n) \ni ([A],S) \mapsto [A]\cdot S\coloneqq A(S) \in \sigma_{\mathbb{R}}(\mathbb{R}^n),
$$ 
where $A$ is a linear map defined by a matrix which is a representative of $[A]$. Through the correspondence, the group also acts on $\tau_{\mathbb{Q}}(\mathbb{Q}^n)$ by
$$
PGL_n(\mathbb{R})\times \tau_{\mathbb{Q}}(\mathbb{Q}^n) \ni([A],T) \mapsto \hat{G}([A]\cdot\hat{F}(T))\in \tau_{\mathbb{Q}}(\mathbb{Q}^n).
$$
For every point $[x_1, x_2, \, \dots \, , x_n] \in P^{n-1}(\mathbb{R})$, we define a compatible topology, denoted by $T_{[x_1, x_2, \, \dots \,  ,x_n]}$, on $\mathbb{Q}^n$ by $\hat{G}(\mathop{\mathrm{span}_{\mathbb{R}}}\{(x_1,x_2,\, \dots \, ,x_n)\})$. Now, we fix a point $[x'_1,x'_2 , \, \dots \, , x'_n]$ in $P^{n-1}(\mathbb{R})$. By Proposition \ref{continuous}, the subset $\tau'$ of $\tau_{\mathbb{Q}}(\mathbb{Q}^n)$ consists of all topologies $T$ such that $(\mathbb{Q}^n,T)$ is isomorphic to $(\mathbb{Q}^n,T_{[x'_1,x'_2 , \, \dots \, , x'_n]})$ as a $\mathbb{Q}$-topological vector space is the $PGL_n(\mathbb{Q})(\subset PGL_n(\mathbb{R}))$-orbit of $T_{[x'_1,x'_2 , \, \dots \, , x'_n]}$, namely 

\begin{align}
\tau'=
\left\{ \, 
T_{[x_1, x_2, \, \dots \, , x_n]} \in \tau_{\mathbb{Q}}(\mathbb{Q}^n)\, \relmiddle|\,
{\scriptsize
\left[
\begin{array}{r}
x_1\\
x_2\\
\vdots \\  
x_n\\
\end{array}
\right]
}
=A
{\scriptsize
\left[
\begin{array}{r}
x'_1\\
x'_2\\
\vdots \\  
x'_n\\
\end{array}
\right]
},
\,A \in PGL_n(\mathbb{Q})
\,\right\}. \nonumber
\end{align}
\par
Next, we fix a basis $\mathcal{B}$ of $\mathbb{Q}$-vector space $\mathbb{R}$ which contains $1$. 
For every element $\beta\not=1$ of $\mathcal{B}$, the number $\beta$ is irrational, and thus a compatible topology $T_{[1,\beta ,\underbrace{0, \, \dots \, , 0}_{n-2\,\text{zeros}}]}$ is a Hausdorff topology from Proposition \ref{equivalent condition of Hausdorff}. For every distinct two elements $\beta_1$ and $\beta_2$ of $\mathcal{B}$, because $1,\beta_1$ and $\beta_2$ are $\mathbb{Q}$-linear independent, the topology $T_{[1,\beta_1 ,0, \, \dots \, , 0]}$ does not belong to the orbit of $T_{[1,\beta_2,0, \, \dots \, , 0]}$ under the action of $PGL_n(\mathbb{Q})$. Thus, the topological vector spaces $(X,T_{[1,\beta_1 ,0, \, \dots \, , 0]})$ and $(X,T_{[1,\beta_2 ,0, \, \dots \, , 0]})$ are not isomorphic. Therefore, there exist uncountable many Hausdorff compatible topologies on $\mathbb{Q}^n$ such that they are not isomorphic each other. 
\end{exa}

\section*{Acknowledgment}
The author would like to thank to Prof. Ken'ichi Ohshika and Prof. Shinpei Baba for helpful discussions and encouragements. This work was supported by the Research Institute for Mathematical
Sciences (RIMS) in Kyoto University.

\end{document}